\DeclareMathOperator{\Gal}{Gal}
\newtheorem{prop}{Proposition}[section]
\newtheorem{lem}[prop]{Lemma}
\newtheorem{theorem}[prop]{Theorem}
\theoremstyle{definition}
\newtheorem{defu}[prop]{Definition}
\newtheorem*{defu*}{Definition}
\newtheorem*{notation*}{Notation and conventions}
\newtheorem{rem}[prop]{Remark}
\newtheorem{ex}[prop]{Example}
\newtheorem{cor}[prop]{Corollary}
\newcommand{\normal}{\triangleleft}
\newcommand{\solv}{\text{solv}}
\newcommand{\C}{\mathcal{C}}
\newcommand{\res}{\textnormal{res}}
\newcommand{\Mgal}{\tilde M}
\newcommand{\Ehat}{\hat{E}}
\newcommand{\id}{\text{id}}
\newcommand{\cmmnt}[1]{\ignorespaces}
\begin{document}
\pagenumbering{arabic}
\setcounter{page}{1}

\title{$\Theta$-Hilbertianity and strong $\Theta$-Hilbertianity%
\thanks{Research supported by ISF grant No.~696/13.}}
\author{S. Fried\thanks{A part of a Ph.D. Thesis done at Tel Aviv University under the supervision of the second author.} \and D. Haran}
\date{} 
\maketitle


\abstract{$\Theta$-Hilbertianity and its strengthening, strong $\Theta$-Hilbertianity, are two generalizations of Hilbertianity inspired by Jarden's definition of $p$-Hilbertianity and strong $p$-Hilbertianity. Jarden has asked whether the two notions defined by him are actually the same. We address this question in its more general version of $\Theta$-Hilbertianity and show that for PRC, and, in particular, for PAC fields, $p$-Hilbertianity and strong $p$-Hilbertianity coincide.}

\section{Introduction}

A field $K$ is called Hilbertian if the following condition holds: For every irreducible polynomial in two variables $f(t,X)\in K[t,X]$, separable in $X$, there exist infinitely many $a\in K$ such that $f(a, X)$ is irreducible in $K[X]$. Fields with this property are called Hilbertian because of Hilbert's Irreducibility Theorem (\cite[Satz~IV]{Hilbert}): 
Number fields are Hilbertian.


The paramount importance of Hilbertian fields lies in applications to Galois theory.
Namely, $t \mapsto a$ defines a $K$-place
of the field of rational functions $K(t)$ with residue field $K$.
If $f(t,X)$ generates a Galois extension $F$ of $K(t)$ with Galois group $G$,
then this place extends to a $K$-place of $F$;
let $F_a$ denote its residue field.
Now, if $f(a,X)$ is irreducible over $K$,
then $[F:K(t)]=[F_a:K]$.
In particular, the Galois group of $F_a/K$
is then isomorphic to $G$.
(If $f(a,X)$ is reducible, then, excluding finitely many $a \in K$, the Galois group of $F_a/K$ is isomorphic to a subgroup of $G$, uniquely determined by $a$ up to conjugation.)
In fact (see Section~\ref{sec: preliminaries}),
$K$ is Hilbertian
if and only if for every finite Galois extension $F/K(t)$ 
there are infinitely many $a \in K$ for which  there exists a $K$-place $\varphi\colon F \to F_a\cup\{\infty\}$ with $\varphi(t) = a$ and $[F_a : K]= [F:K(t)]$.

Naturally, such an important notion calls for generalizations.
These are many and the literature abounds:
\begin{itemize}
    \item
  Brauer-Hilbertian fields of Fein, Saltman, and Schacher (\cite{FSS}),  
    \item 
the $0$-Hilbertian fields of Corvaja and Zannier (\cite[Section 13.5]{FA}),
generalized to
    \item
$g$-Hilbertian fields
by Fried and Jarden (\cite[Section 13.5]{FA}),
based on the alternative definition of Hilbertian fields
by thin sets of Sansuc and Colliot-Th\'el\`ene
(see \cite[Section 13.5]{FA}),
    \item
variants such as RG-Hilbertian fields
introduced by Fried and V\"olklein in \cite{FV}
and investigated by D\`ebes and the second author in \cite{DH},
    \item
real Hilbertian fields of \cite{FHV},
 \item
 fully Hilbertian fields of Bary-Soroker and Paran (\cite{BSP}),
    \item
$\Sigma$-Hilbertian fields of Fried and Jarden (\cite{FJSigma}),
and
    \item
$p$-Hilbertian fields of Jarden (\cite{p-Hilbert}),
generalized to
    \item
$\Theta$-Hilbertian fields in \cite{FH2}.
\end{itemize}

The last three examples consider the above mentioned
specialization property of Galois groups
and require that it hold 
only for certain groups. This is very natural. For example, let $K$ be a Hilbertian field and $p$ a prime number. Consider $K_p$, the fixed field of a $p$-Sylow subgroup of $\Gal(K)$. Clearly, $K_p$ cannot be Hilbertian since it does not admit Galois extensions with Galois groups that are not $p$-groups. Nevertheless, Jarden \cite{p-Hilbert} proves that $K_p$ has the specialization property for $p$-groups and calls fields with this property \textbf{$p$-Hilbertian}. The proof actually shows that $K_p$ possesses a seemingly stronger property, which Jarden calls \textbf{strong $p$-Hilbertianity}: A finite Galois extension with an arbitrary group can be specialized to one of its $p$-Sylow subgroups.

Similarly, if $K$ is a Hilbertian field, then $K_\solv$, the maximal pro-solvable extension of $K$, has no proper solvable extensions and therefore cannot be Hilbertian. But it has the specialization property for groups that have no nontrivial solvable quotient
(\cite[Theorem 2.6]{FH2}). Moreover, any finite group $G$ can be specialized to its subgroup $G^\solv$, the kernel of the quotient map from $G$ onto its maximal solvable quotient.

The main insight leading to the definition of \textbf{$\Theta$-Hilbertianity} and of its seemingly stronger version \textbf{strong $\Theta$-Hilbertianity} \cite[Definition 2.1]{FH2} is that both the $p$-Sylow case as well as the solvable case may be treated simultaneously by using \textbf{Sylowian maps} -- maps that assign to every profinite group $G$ the conjugacy class of some closed subgroup of $G$ such that certain conditions are satisfied. Every Sylowian map $\Theta$ gives rise to a class $\C(\Theta)$ of finite groups and, in essence, $\Theta$-Hilbertianity means that the specialization property holds for all Galois groups $G$ such that $G\in\C(\Theta)$, while strong $\Theta$-Hilbertianity means that every finite Galois group $G$ can be specialized to one of its subgroups that belong to $\Theta(G)$. This is indeed a strengthening, since $\C(\Theta)$ consists of all finite groups $G$ such that $\Theta(G)=\{G\}$.

Jarden (\cite{p-Hilbert}) asks whether every $p$-Hilbertian field with pro-$p$ absolute Galois group is strongly $p$-Hilbertian. In this work we address this question in its more general version of $\Theta$-Hilbertianity. We show that for PAC fields whose absolute Galois group is pro-$\C(\Theta)$ and, under certain additional conditions, also for PRC fields, $\Theta$-Hilbertianity and strong $\Theta$-Hilbertianity coincide. From this we deduce a positive answer to Jarden's question for PAC and PRC fields with pro-$p$ absolute Galois groups. Whether this is true in the general case remains an open question.

\bigskip

This paper is structured as follows:
In Section \ref{sec: preliminaries} we recall the necessary definitions
and make the technical preparations.
Section \ref{Main Results} contains our main results.

\medskip
\begin{notation*}
For a field $K$ we denote by $K_s$ its separable closure. If $L/K$ is a Galois extension of fields, we denote by $\Gal(L/K)$ its Galois group; we denote by $\Gal(K)$ the absolute Galois group $\Gal(K_s/K)$ of $K$.

Groups in this work are tacitly assumed to be profinite groups, their subgroups are assumed to be closed and all the homomorphisms between profinite groups are continuous. 

If $\varphi$ is a place of a field $F$ and $E$ is a subfield of $F$, we denote by $\overline{E}$ the residue field of $E$ at the restriction of $\varphi$ to $E$ (omitting the reference to $\varphi$, which will be clear from the context). For a field $K$ we denote by $K(t)$ the field of rational functions in one variable over $K$. 
\end{notation*}

\paragraph{Acknowledgement}
We wish to express our gratitude to Lior Bary-Soroker for inspiring conversations and to the anonymous referees for their valuable remarks and suggestions that helped us to improve this work.

\section{Preliminaries}\label{sec: preliminaries}

Unless otherwise stated, the content of this section is taken from \cite{FH1} and \cite{FH2}.

\subsection{Sylowian maps and \texorpdfstring{$\Theta$}{T}-Hilbertianity}

Sylowian maps are used to define $\Theta$-Hilbertianity, a generalization of Hilbertianity.

\begin{defu}\label{def; Sylowian map}
Let $\Theta$ be a map that assigns to every profinite group $G$ the conjugacy class $\Theta(G)$ of a closed subgroup of $G$. We call the map $\Theta$ \textbf{Sylowian} if the following two conditions are satisfied:
\begin{enumerate}
\item [(a)] Let $\varphi \colon G \to H$ be an epimorphism of profinite groups. Then $\varphi(\Theta(G))=\Theta(H)$.
\item [(b)] Assume $U\in \Theta(G)$. Then $\Theta(U)=\{U\}$.
\end{enumerate}
If $F/E$ is a Galois extension, denote by $\Theta(F/E)$ the set of
intermediate fields $E \subseteq E' \subseteq F$ such that $\Gal(F/E')\in\Theta(\Gal(F/E))$.
If $F=E_s$, write $\Theta(E)$ instead of $\Theta(F/E)$.
\end{defu}

\begin{ex}\label{examples of Sylowian maps}
\begin{enumerate}
\item [(a)] The trivial (resp. identity) map $\Theta(G)=\{1\}$ (resp. $\Theta(G)=\{G\}$) for all profinite groups $G$ is a  \cmmnt{consistent} Sylowian map.
\item [(b)] Let $p$ be a prime number. For a profinite group $G$ let $\Theta(G)$ be the conjugacy class of all $p$-Sylow subgroups of $G$ (cf. \cite[Definition 22.9.1]{FA}). It follows from \cite[Proposition 22.9.2]{FA} that $\Theta$ is a  \cmmnt{consistent} Sylowian map.
\item [(c)] Let $\C$ be a Melnikov formation, i.e. a class of finite groups closed under quotients, normal subgroups and extensions. For a profinite group $G$ let $\Theta(G) = \{G^\C\}$, where $$G^\C = \bigcap_{N\normal G, \ G/N\in\C}N.$$ Thus, $G^\C$ is the normal subgroup of $G$ such that $G/G^\C$ is the maximal pro-$\C$ quotient of $G$ (\cite[Definition 17.3.2]{FA}). By \cite[Lemma 3.4.1]{RZ}(b) and (d), $\Theta$ is a Sylowian map.  \cmmnt{If $\C$ is a full formation, then by \cite[Lemma 3.4.1]{RZ}(c), $\Theta$ is consistent.}
\end{enumerate}
\end{ex}

\begin{defu}\label{def; C of Theta}
Let $\Theta$ be a Sylowian map. Define
$$\C(\Theta)=\{U\;|\: U\in\Theta(G) \text{, where }G \text{ is a finite group}\}.$$ 
Clearly, by Definition \ref{def; Sylowian map}(b),
$\C(\Theta)=\{G\;|\: G \text{ is a finite group such that } \Theta(G)=\{G\}\}$.
\end{defu}

\begin{ex}
Let us describe $\C(\Theta)$ explicitly  for each of the Sylowian maps in Example \ref{examples of Sylowian maps}:
\begin{enumerate}
    \item [(a)] $\C(\Theta)$ consists of the trivial group (resp. of all finite groups).
    \item [(b)] $\C(\Theta)$ consists of all $p$-groups.
    \item [(c)] $\C(\Theta)$ consists of the class of finite groups that have no nontrivial quotient in $\C$.
\end{enumerate}
\end{ex}

By \cite[Proposition 1.14]{FH2},
$\C(\Theta)$ is a \emph{quasi-formation} (cf. \cite[Definition 3.1]{FH1}). In particular, it is closed under taking quotients. In the sequel we will only use the latter property which is necessary for the consideration of pro-$\mathcal{C}(\Theta)$ groups (cf. \cite[p. 19]{RZ}).

The following lemma is taken from the first author's dissertation (\cite{F}):

\begin{lem}\label{lem; ring cover1}
Let $F/E$ be a finite Galois extension and let $\psi$ be a place of $F$, unramified over $E$, with valuation ring $O$. Write $\bar{x}$ for $\psi(x)$ for every $x\in O$. Then:
\begin{enumerate}
\item [(a)] For every $\bar{\sigma}\in\Gal(\overline{E})$ there is a unique $\sigma=\psi^*(\bar{\sigma})\in\Gal(F/E)$ such that for all $x\in O$: 
\begin{equation}\label{equat}
\overline{\sigma(x)}=\bar{\sigma}(\bar{x}).
\end{equation}
Moreover, $\psi^*\colon\Gal(\overline{E})\to\Gal(F/E)$ is a group homomorphism.
\item [(b)] We have $[F:E]=[\overline{F}:\overline{E}]$ if and only if $\psi^*$ is surjective.
\item [(c)] Let $E'$ be an intermediate field of $F/E$. Then $$\psi^*(\Gal(\overline{E'}))=\Gal(F/E')\cap\psi^*(\Gal(\overline{E})).$$
\end{enumerate}
\end{lem}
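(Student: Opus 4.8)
The plan is to factor $\psi^{*}$ through the classical decomposition and inertia groups of $\psi$. Set $D=\{\sigma\in\Gal(F/E)\mid\sigma(O)=O\}$ and $I=\{\sigma\in D\mid\overline{\sigma(x)}=\bar x\text{ for all }x\in O\}$. Two preliminary observations: first, if $\sigma\in\Gal(F/E)$ satisfies $\sigma(O)\subseteq O$, then, $\sigma$ having finite order $n$, the chain $O=\sigma^{n}(O)\subseteq\sigma^{n-1}(O)\subseteq\cdots\subseteq\sigma(O)\subseteq O$ forces $\sigma(O)=O$, so that any $\sigma$ fulfilling the identity of (a) automatically lies in $D$ (the left-hand side $\overline{\sigma(x)}$ of that identity must be defined for all $x\in O$); second, since $\psi$ is unramified the residue extension $\overline F/\overline E$ is separable, and since $F/E$ is normal $\overline F/\overline E$ is normal, so $\overline F/\overline E$ is Galois and we may take $\overline F\subseteq\overline E_{s}$. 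Each $\sigma\in D$ restricts to a ring automorphism of $O$ fixing $O\cap E$ pointwise and preserving the maximal ideal, hence induces $\rho(\sigma)\in\Gal(\overline F/\overline E)$ with $\rho(\sigma)(\bar x)=\overline{\sigma(x)}$; thus $\rho\colon D\to\Gal(\overline F/\overline E)$ is a homomorphism with kernel exactly $I$.

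The two substantive valuation-theoretic inputs are that $\rho$ is surjective and that $I=1$. Surjectivity of $\rho$ is the standard lifting property of decomposition groups, whose core is Hensel's lemma (or a primitive-element-and-approximation argument), and I would quote it. Triviality of $I$ is precisely where the hypothesis ``unramified'' is spent: with ramification index $1$ and separable residue extension the inertia group collapses. Together these say $\rho$ is an isomorphism $D\xrightarrow{\sim}\Gal(\overline F/\overline E)$.

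The three assertions are then bookkeeping. For (a): the restriction $r\colon\Gal(\overline E)\to\Gal(\overline F/\overline E)$ is surjective, so I put $\psi^{*}=\rho^{-1}\circ r$, a composite of homomorphisms. For $\bar\sigma\in\Gal(\overline E)$, the element $\sigma=\psi^{*}(\bar\sigma)$ lies in $D$ and satisfies $\overline{\sigma(x)}=\rho(\sigma)(\bar x)=r(\bar\sigma)(\bar x)=\bar\sigma(\bar x)$ for all $x\in O$; conversely any $\sigma\in\Gal(F/E)$ with this property lies in $D$ and has $\rho(\sigma)=r(\bar\sigma)$, hence equals $\psi^{*}(\bar\sigma)$ by injectivity of $\rho$ --- which is the asserted uniqueness. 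For (b): via $\rho$ we get $[\overline F:\overline E]=|\Gal(\overline F/\overline E)|=|D|$, while $\Img\psi^{*}=\rho^{-1}\bigl(r(\Gal(\overline E))\bigr)=\rho^{-1}\bigl(\Gal(\overline F/\overline E)\bigr)=D$ since $r$ is onto; therefore $\psi^{*}$ is surjective $\iff D=\Gal(F/E)\iff|D|=[F:E]\iff[\overline F:\overline E]=[F:E]$. For (c): $F/E'$ is again Galois and $\psi$ is unramified over $E'$, so (a) and (b) applied to $F/E'$ produce $\psi^{*}_{E'}\colon\Gal(\overline{E'})\to\Gal(F/E')$ with image the decomposition group $D'=\{\sigma\in\Gal(F/E')\mid\sigma(O)=O\}=D\cap\Gal(F/E')$; by the uniqueness in (a), $\psi^{*}_{E'}$ is the restriction of $\psi^{*}$ to $\Gal(\overline{E'})\le\Gal(\overline E)$, whence $\psi^{*}(\Gal(\overline{E'}))=D'=D\cap\Gal(F/E')=\psi^{*}(\Gal(\overline E))\cap\Gal(F/E')$.

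The main obstacle is the valuation theory of the second paragraph --- above all the surjectivity of $\rho$ (the Hensel-type lifting), and, underneath it, the normality of the residue extension used in the first paragraph --- since these are the only points where one genuinely reasons about valuations rather than rearranging groups; I would cite them from a standard source on valuation theory. The triviality of $I$ is the single place where ``unramified'' is actually invoked, but it is routine once the decomposition/inertia picture is in place, and parts (b) and (c) are then entirely formal.
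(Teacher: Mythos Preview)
Your proposal is correct and follows essentially the same route as the paper: both set up the decomposition group $D_\psi$ and inertia group $I_\psi$, invoke the standard exact sequence $1\to I_\psi\to D_\psi\to\Gal(\overline F/\overline E)\to 1$ together with the unramifiedness hypothesis to obtain an isomorphism $D_\psi\cong\Gal(\overline F/\overline E)$, and then define $\psi^*$ as the composite of restriction with the inverse of this isomorphism. Your treatment of (c) via $D'=D\cap\Gal(F/E')$ is a slight repackaging of the paper's two-inclusion argument, and your explicit finite-order chain showing that any $\sigma$ satisfying \eqref{equat} must lie in $D$ is a detail the paper leaves implicit, but the underlying argument is the same.
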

\begin{proof}
(a) Let $D_\psi$ (resp. $I_\psi$) be the decomposition (resp. inertia) group of $\psi$ in $F/E$. The assumption that $\psi$ is unramified over $E$ means, by definition, that the extension $\overline{F}/\overline{E}$ is separable and $I_\psi$ is trivial (\cite[p. 25]{FA}). By \cite[Lemma 6.1.1(a)]{FA}, $\overline{F}/\overline{E}$ is normal and therefore Galois and there exists a short exact sequence 
\begin{equation*}
1\to I_\psi\to D_\psi\to\Gal(\overline{F}/\overline{E})\to 1.
\end{equation*}

Moreover, the isomorphism $D_\psi\to\Gal(\overline{F}/\overline{E})$ is given by $\tau\mapsto\bar{\tau}$, where $\bar{\tau}(\bar{x})=\overline{\tau(x)}$ for all $x\in O$. Let $\Psi\colon \Gal(\overline{F}/\overline{E})\to D_\psi$ be its inverse. Then for all $\bar{\sigma}\in\Gal(\overline{F}/\overline{E})$ and $x\in O$: $\bar{\sigma}(\bar{x}) = \overline{\Psi(\bar{\sigma})(x)}$. Finally, let $\psi^*=\Psi\circ\res\colon\Gal(\overline{E})\to D_\psi$, where $\res\colon \Gal(\overline{E})\to\Gal(\overline{F}/\overline{E})$ is the restriction map. Then for all $\bar{\sigma}\in\Gal(\overline{E})$ and $x\in O$ we have  $$\overline{\psi^*(\bar{\sigma})(x)}=\overline{\Psi(\res(\bar{\sigma}))(x)}=\res(\bar{\sigma})(\bar{x})= \bar{\sigma}(\bar{x}).$$

To show the uniqueness of $\psi^*(\bar{\sigma})$ for $\bar{\sigma}\in\Gal(\overline{E})$, notice that if $\tau\in\Gal(F/E)$ satisfies (\ref{equat}), i.e. $\overline{\tau(x)}=\bar{\sigma}(\bar{x})$ for all $x\in O$, then $\tau\in D_\psi$. As $D_\psi\cong\Gal(\overline{F}/\overline{E})$, $\psi^*(\bar{\sigma})$ is unique.

(b) We have seen in (a) that the image of $\psi^*$ is the decomposition group $D_\psi$ which has order $[\overline{F}:\overline{E}]$. Thus, the image of $\psi^*$ is $\Gal(F/E)$ if and only if $[F:E]=[\overline{F}:\overline{E}]$.

(c) Apply (a) to $E'$ instead of $E$. By the uniqueness, the resulting homomorphism $\Gal(\overline{E'})\to\Gal(F/E')$ is the restriction of $\psi^*$ to $\Gal(\overline{E'})$. Thus, $\psi^*(\Gal(\overline{E'}))\subseteq \Gal(F/E')$. Conversely, let $\bar{\sigma}\in\Gal(\overline{E})$ with $\sigma=\psi^*(\bar{\sigma})\in\Gal(F/E')$. Then $\bar{\sigma}(\bar{x})=\overline{\sigma(x)}=\bar{x}$ for every $x\in O\cap E'$. Thus, $\bar{\sigma}\in\Gal(\overline{E'})$.
\end{proof}

In contrast to the standard definition of Hilbertianity (cf. \cite[p. 219]{FA}), our definition of $\Theta$-Hilbertianity uses the terminology of places and Galois groups. It is justified by the following consideration: By \cite[Lemma 12.1.6]{FA}
(with the slight adjustment that the extension
$K(\textbf{T},y)/K(\textbf{T})$
in their proof should be required to be Galois),
a field $K$ is Hilbertian
if and only if for every finite Galois extension $F/K(t)$ 
there are infinitely many $a \in K$ for which  there exists a $K$-place $\varphi\colon F \to \overline{F}\cup\{\infty\}$ with $\varphi(t) = a$ and $[\overline{F} : K]= [F:K(t)]$.


\begin{defu}\label{def; Hilbertian}
Let $K$ be a field and let $\Theta$ be a Sylowian map. Let $F/K(t)$ be a finite Galois extension and let $F_\Theta\in\Theta(F/K(t))$. We denote by $H_{K,\Theta}(F)$ the set of all $a \in K$ such that there exists a $K$-place $\varphi\colon F \to \overline{F}\cup\{\infty\}$ with $\varphi(t) = a$ and $[\overline{F} : \overline{F_\Theta}]= [F:F_\Theta]$.
Notice that $H_{K,\Theta}(F)$ does not depend on the choice of $F_\Theta$, which is unique up to conjugation in $\Gal(F/K(t))$.

We say that
$K$ is \textbf{$\Theta$-Hilbertian}
if $H_{K,\Theta}(F)$ is infinite for every finite Galois extension $F/K(t)$ satisfying $\Gal(F/K(t))\in\C(\Theta)$, i.e. $\Theta(F/K(t))=\{K(t)\}$.
We say that
$K$ is \textbf{strongly $\Theta$-Hilbertian}
if $H_{K,\Theta}(F)$ is infinite for every finite Galois extension $F/K(t)$.

In particular,
if $p$ is a prime number,
we say that
\textbf{$K$ is $p$-Hilbertian} (resp., \textbf{strongly $p$-Hilbertian})
if $K$ is $\Theta$-Hilbertian (resp., strongly $\Theta$-Hilbertian),
where $\Theta(G)$ is the conjugacy class of $p$-Sylow subgroups of $G$,
for every profinite group $G$
(Example \ref{examples of Sylowian maps}(b)).
\end{defu}

Thus, a field $K$ is Hilbertian if and only if it is $\Theta$-Hilbertian,
where $\Theta$ is the identity Sylowian map 
(Example \ref{examples of Sylowian maps}(a)).

\subsection{Embedding problems and the field crossing argument}\label{sec: absolute Galois group}


\begin{defu}\label{def; emb}
Let $A,B$ and $G$ be profinite groups. 
An \textbf{embedding problem for $G$} is a pair
\begin{equation}\label{eq; embedding problem}(\varphi\colon G\to A, \alpha\colon B\to A) \end{equation}
in which $\varphi$ and $\alpha$ are epimorphisms. We call (\ref{eq; embedding problem}) \textbf{finite} if $B$ is finite and \textbf{split} if there exists a homomorphism $\alpha'\colon A\to B$ with $\alpha\circ\alpha'=\id_A$. A \textbf{weak solution} to (\ref{eq; embedding problem}) is a homomorphism $\gamma\colon G\to B$ such that $\alpha\circ\gamma=\varphi$. A weak solution $\gamma$ to (\ref{eq; embedding problem}) is a \textbf{solution} if $\gamma$ is surjective.
\end{defu}

\begin{defu}
Let $L/K$ be a finite Galois extension of fields and identify $\Gal(L(t)/K(t))$ with $A=\Gal(L/K)$ via restriction. Two special cases are of interest:
\begin{enumerate}
\item [(a)] Let $G=\Gal(K(t))$ and $\varphi = \res_{K(t)_s/L}$. We refer to (\ref{eq; embedding problem}) as a \textbf{constant embedding problem over $K(t)$} and call a solution $\gamma\colon G\to A$ to (\ref{eq; embedding problem}) \textbf{regular} if $F/L$ is a regular field extension where $F$ is the fixed field of $\ker(\gamma)$ in $K(t)_s$ (cf. \cite[Section 4.4]{Patching}).
\item [(b)] Let $G=\Gal(K)$ and $\varphi = \res_{K_s/L}$. We say that $K$ is \textbf{$\omega$-free}, if every finite embedding problem for $G$ has a solution (cf. \cite[Section 5.10]{Patching}).
\end{enumerate}



\end{defu}

A main ingredient in the proof of our main results is the field crossing argument (cf. \cite[Section 24.1]{FA}). It provides a connection between homomorphisms of Galois groups and places of fields. Our version is more general than the one mentioned (we do not consider only PAC fields) and tailored to our needs.

\begin{prop}\label{field-crossing(a)}
Let $E/K$ be a regular extension of fields and let $F/E$ and $M/K$ be finite Galois extensions. Assume that $M$ contains the algebraic closure $L$ of $K$ in $F$. 
\begin{enumerate}
\item [(a)] Let $\gamma \colon \Gal(K) \to \Gal(F/E)$ be a homomorphism such that $\res_{F/L}\circ\gamma=\res_{K_s/L}$ and $\Gal(M)\leq\ker(\gamma)$. 
Then there exists a unique homomorphism $\gamma'$ such that the following diagram commutes:
\begin{equation}\label{groups crosing diagram}
\xymatrix{
\Gal(K)
\ar@/_{3pc}/[dd]_{\gamma}\ar[dr]^{\res_{K_s/M}} \ar[d]^{\gamma'} \\
\Gal(FM/E) \ar[d]^{\res_{FM/F}} \ar[r]^{\res_{FM/M}} & \Gal(M/K)\rlap{$\cong \Gal(EM/E)$}\ar[d]^{\res_{M/L}} \\
\Gal(F/E) \ar[r]^{\res_{F/L}}&\Gal(L/K)\rlap{$\cong \Gal(EL/E)$} \\
}
\end{equation}
Moreover,
let $D$ be the fixed field of $\Delta = \gamma'(\Gal(K))$ in $FM$. Then
\begin{itemize}
\item[(a1)]
the square in the above diagram is cartesian,
\item[(a2)]
$D$ is a regular extension of $K$,
\item[(a3)]
$E \subseteq D \subseteq FM$ and $DM = FM$,
and
\item[(a4)]
$\gamma(\Gal(K))=\Gal(F/F\cap D)$.
\end{itemize}


\item [(b)] Let $E\subseteq D'\subseteq FM$ be an intermediate field, regular over $K$, such that $D'M = FM$. Suppose there exists a $K$-place $\varphi'\colon D'\to K\cup\{\infty\}$, unramified over $E$. Extend $\varphi'$ to an $M$-place $\varphi$ of $FM$ and let $\psi$ be the restriction of $\varphi$ to $F$. Then the homomorphism $\psi^*\colon\Gal(K)\to\Gal(F/E)$ defined in Lemma \ref{lem; ring cover1}(a) (notice that $\overline{E}=K$) satisfies 
$\psi^*(\Gal(K))=\Gal({F/F\cap D'})$.
\end{enumerate}
\end{prop}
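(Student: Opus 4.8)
The plan is to prove (a) and (b) in parallel, exploiting the symmetry between the field $D$ obtained from $\gamma$ in part (a) and the abstractly given field $D'$ in part (b). For part (a), the existence and uniqueness of $\gamma'$ should come from the universal property of the fibre product: the square at the bottom of diagram \eqref{groups crosing diagram} is cartesian because $F$ and $M$ are linearly disjoint over $L$ (here I would use that $E/K$ is regular, hence $E$ and $M$ are linearly disjoint over $L$, and $F/E$ is Galois with $F \cap EM = EL$, so $\Gal(FM/E) \cong \Gal(F/E) \times_{\Gal(L/K)} \Gal(M/K)$); this is exactly (a1). Then $\gamma$ together with $\res_{K_s/M}$ agree after composing down to $\Gal(L/K)$ — by hypothesis $\res_{F/L}\circ\gamma = \res_{K_s/L} = \res_{M/L}\circ\res_{K_s/M}$ — so they induce a unique $\gamma'\colon \Gal(K) \to \Gal(FM/E)$ lifting both. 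The condition $\Gal(M) \le \ker(\gamma)$ guarantees that $\res_{FM/M}\circ\gamma'$ is genuinely $\res_{K_s/M}$ and not merely something mapping to it.

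\textbf{Parts (a2)--(a4).}
Once $\gamma'$ and $\Delta = \gamma'(\Gal(K))$ are in place, let $D = (FM)^\Delta$. Regularity of $D/K$ (a2): the algebraic closure of $K$ in $D$ corresponds to the subgroup $\Delta\cdot\Gal(FM/\overline{K}^{FM})$; since $\res_{FM/M}\circ\gamma' = \res_{K_s/M}$ is surjective onto $\Gal(M/K)$ and $M$ contains $L = \overline{K}^{F}$, the composite $\Delta \to \Gal(M/K)$ is onto, which forces $D \cap M = K$, hence (using $M \supseteq L$) $D/K$ regular and simultaneously $DM = FM$ (a3); the inclusion $E \subseteq D$ holds because $\gamma'$ lands in $\Gal(FM/E)$. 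For (a4): $F \cap D = F^{\,\res_{FM/F}(\Delta)} = F^{\gamma(\Gal(K))}$ since $\res_{FM/F}\circ\gamma' = \gamma$, and therefore $\gamma(\Gal(K)) = \Gal(F/F\cap D)$. This last point uses that $\gamma(\Gal(K))$ is a \emph{closed} subgroup of $\Gal(F/E)$, so Galois theory applies directly.

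\textbf{Part (b).}
Here I would run the argument in the opposite direction. Given $D'$ with $E \subseteq D' \subseteq FM$, $D'/K$ regular, $D'M = FM$, and an unramified $K$-place $\varphi'\colon D' \to K\cup\{\infty\}$, extend it to an $M$-place $\varphi$ of $FM = D'M$ and restrict to $F$ to get $\psi$. Apply Lemma \ref{lem; ring cover1}(a) to $F/E$ and $\psi$, noting $\overline{E} = K$ because $\varphi$ is a $K$-place; this yields $\psi^*\colon \Gal(K) \to \Gal(F/E)$. To identify $\psi^*(\Gal(K))$ with $\Gal(F/F\cap D')$, the key is to compare $\psi^*$ with the analogous homomorphism $\varphi^*\colon \Gal(K) \to \Gal(FM/E)$ attached to the place $\varphi$ of $FM$: by uniqueness in Lemma \ref{lem; ring cover1}(a), $\psi^* = \res_{FM/F}\circ\varphi^*$, so $\psi^*(\Gal(K)) = \res_{FM/F}(\varphi^*(\Gal(K)))$. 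Now $\varphi^*(\Gal(K))$ is the decomposition group of $\varphi$ in $FM/E$; since $\varphi$ restricts to a $K$-place of $D'$ with $\overline{D'} = K$, and $D'M = FM$ with $D'/K$ regular (so $\overline{M}=M$ over $K$ forces the residue degree $[\overline{FM}:\overline{M}] = [\overline{D'}:\overline{D'\cap M}] = [K:K] = 1$... here I must be careful) — the cleaner route: $D'M = FM$ and $D'$ is the fixed field of some subgroup, and $\varphi|_{D'}$ being a $K$-place means $\varphi^*(\Gal(K)) \subseteq \Gal(FM/D')$; combined with $\res_{FM/M}\circ\varphi^* = \psi^*_M$ being surjective (the place $\varphi$ is an $M$-place, so its restriction to $M$ is trivial, and $[\overline{FM}:M]$... ) this pins $\varphi^*(\Gal(K)) = \Gal(FM/D')$ by an order count as in Lemma \ref{lem; ring cover1}(b), whence $\res_{FM/F}(\Gal(FM/D')) = \Gal(F/F\cap D')$.

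\textbf{Main obstacle.}
The delicate point throughout is bookkeeping with decomposition groups and residue degrees: specifically, in part (b), establishing that $\varphi^*(\Gal(K))$ equals all of $\Gal(FM/D')$ rather than a proper subgroup. This needs the hypothesis that $D'/K$ is regular (so that the place $\varphi'$ having residue field $K$ is compatible with $FM/M$ picking up the full Galois action), plus $D'M = FM$ (so that $\Gal(FM/D') \cap \Gal(FM/M) = 1$ and the two projections from $\Gal(FM/D')$ behave well). I expect the regularity hypothesis is exactly what prevents a drop in residue degree, mirroring how in part (a) regularity of $D$ was equivalent to surjectivity of $\Delta \to \Gal(M/K)$. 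Once that order-count is secured, the rest is formal Galois correspondence.
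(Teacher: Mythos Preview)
Your plan for part (a) has a genuine gap in (a3). You assert that surjectivity of $\Delta \to \Gal(M/K)$ gives \emph{both} $D\cap M = K$ \emph{and} $DM = FM$. Surjectivity indeed yields $D\cap M = K$, but $DM = FM$ is equivalent to $\Delta \cap \Gal(FM/M) = 1$, i.e.\ to \emph{injectivity} of $h = \res_{FM/M}|_\Delta$. This is precisely where the hypothesis $\Gal(M)\le\ker(\gamma)$ is needed, and you have misplaced its role: it is not used for the existence of $\gamma'$ (the universal property of the fibre product requires only $\res_{F/L}\circ\gamma=\res_{K_s/L}$), but rather for this injectivity. The argument runs: if $\sigma=\gamma'(\tau)\in\Delta$ has $\res_{FM/M}(\sigma)=1$, then $\tau\in\Gal(M)\le\ker(\gamma)$, so $\res_{FM/F}(\sigma)=\gamma(\tau)=1$; now both projections of $\sigma$ in the cartesian square vanish, hence $\sigma=1$. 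Without this, $DM$ could be a proper subfield of $FM$. Your regularity claim (a2) is also incomplete: $D\cap M = K$ only gives linear disjointness of $D$ and $M$ over $K$, not of $D$ and $\tilde K$; one must additionally use that $FM/M$ is regular (since $F/L$ is) and apply the tower property.

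For part (b), your direct approach via the decomposition group of $\varphi$ in $FM/E$ can be made to work, but the paper takes a tidier route that you should consider: it sets $\gamma=\psi^*$, checks the two hypotheses of part (a) ($\res_{F/L}\circ\psi^*=\res_{K_s/L}$ by a direct verification on elements of $L$, and $\Gal(M)\le\ker(\psi^*)$ because $\psi$ comes from an $M$-place), and then observes that by uniqueness the resulting $\gamma'$ must equal $\varphi^*$. Part (a) then delivers a field $D$ with $[FM:D]=[M:K]$; Lemma~\ref{lem; ring cover1}(c) gives $D'\subseteq D$, and since the chain $[D'M:D']\le[M:K]\le[\overline{D'M}:\overline{D'}]\le[D'M:D']$ forces $[FM:D']=[M:K]$ as well, one gets $D=D'$ and hence $\psi^*(\Gal(K))=\Gal(F/F\cap D')$ by (a4). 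This avoids the residue-degree bookkeeping you flag as your main obstacle, at the cost of invoking part (a) as a black box.
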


\begin{proof}
\begin{enumerate}
\item [(a)] Since $F$ and $M$ are linearly disjoint over $L$, by \cite[Lemma 2.5.3]{FA}, $F$ and $(EL)M = EM$ are linearly disjoint over $EL$. In particular, $F \cap EM = EL$. Clearly, $F(EM) = FM$. Hence, by Galois theory, the square in (\ref{groups crosing diagram}) is cartesian. Since $\res_{F/L}\circ\gamma=\res_{K_s/L}$, by the universal property of cartesian squares (\cite[Proposition 22.2.1(b)]{FA}), there exists a unique homomorphism $\gamma'$ such that (\ref{groups crosing diagram}) commutes.

Let $h\colon\Delta\to\Gal(M/K)$ be the restriction of $\res_{FM/M}$ to $\Delta$. We claim that $h$ is an isomorphism. Indeed, $h\circ\gamma'=\res_{K_s/M}$ and $\res_{K_s/M}$ is surjective, hence, $h$ is surjective. Now, let $\sigma\in\Delta$ with $h(\sigma)=1$, i.e., $\res_{FM/M}(\sigma)=1$. There exists $\tau\in\Gal(K)$ such that $\gamma'(\tau)=\sigma$. Thus, $\res_{K_s/M}(\tau)=1$. Hence, $\tau\in\Gal(M)\leq\ker(\gamma)$ and therefore $\gamma(\tau) = 1$. Thus, $\res_{FM/F}(\sigma) = \res_{FM/F}(\gamma'(\tau)) =  \gamma(\tau) = 1$. Conclude that $\sigma=1$. Hence,
$h$ is injective and therefore an isomorphism.

It follows that $DM = FM$ and $M\cap D=K$. In particular, $D$ and $M$ are linearly disjoint over $K$. Also $DM$ and the algebraic closure $\Mgal$ of $M$ are linearly disjoint over $M$, since $DM = FM$ is regular over $M$. By \cite[Lemma 2.5.3]{FA} $D$ and $\Mgal$ (which is also an algebraic closure of $K$) are linearly disjoint over $K$. Hence, $D/K$ is regular. Moreover, the restriction of $\res_{FM/F}$ to $\Delta$ is onto $\gamma(\Gal(K))$. Therefore, $F\cap D$ is the fixed field of $\gamma(\Gal(K))$ in $F$.

\item [(b)] It holds, with respect to $\varphi$, that $\overline{D'} = K$ and $\overline{M} = M$, hence
$$[D'M:D']\leq[M:K]\leq[\overline{D'M}:\overline{D'}]\leq[D'M:D'].$$ Thus, these inequalities are in fact equalities. Hence, $\varphi$ is unramified over $E$ and therefore the same holds for $\psi$. 

We wish to apply (a) to $\gamma=\psi^*$. First, since $\psi$ is the restriction of an $M$-place, $\Gal(M)\leq\ker(\psi^*)$. Second, $\res_{F/L}\circ\psi^*= \res_{K_s/L}$. Indeed, let $\bar{\sigma}\in\Gal(K)$ and $x\in L$. Denote $\sigma=\psi^*(\bar{\sigma})$. Since $\psi$ is an $L$-place, $\bar{x}=x$ and $\overline{\sigma(x)}=\sigma(x)$. It follows from \eqref{equat} that $\sigma(x)=\bar{\sigma}(x)$. This shows that $(\res_{F/L}\circ\psi^*)(\bar{\sigma})= \res_{K_s/L}(\bar{\sigma})$. 

Now, by the uniqueness of $\gamma'$ in (a), necessarily $\gamma'=\varphi^*$ and we obtain from (a) an intermediate field $E\subseteq D\subseteq FM$ such that $\Gal(FM/D)=\gamma'(\Gal(K))$. By Lemma \ref{lem; ring cover1}(c) (where we take $E'$ to be $D'$ and $F$ to be $FM$ and use  $\overline{D'}=K$), $\Gal(FM/D)\leq\Gal(FM/D')$ or, equivalently, $D'\subseteq D$. As $[DM:D]=[M:K]$, we have $D'=D$. Thus, by (a4),
$\psi^*(\Gal(K))=\gamma(\Gal(K))=\Gal(F/F\cap D')$.
\end{enumerate}
\end{proof}

\section{Main results}\label{Main Results}

The question that we address in this work, namely, whether $\Theta$-Hilbertianity and strong $\Theta$-Hilbertianity coincide, generalizes a question asked by Moshe Jarden in an unpublished manuscript \cite{p-Hilbert}, where he defines $p$-Hilbertianity and strong $p$-Hilbertianity (Definition~\ref{def; Hilbertian}) for fields with pro-$p$ absolute Galois group.

\subsection{PAC fields}\label{section PAC}

Recall that a field $K$ is called \textbf{pseudo algebraically closed (PAC)} if every absolutely irreducible variety defined over $K$ has a $K$-rational point. 
Examples of PAC fields include, among others,
infinite models of the theory of finite fields (\cite[Corollary 20.10.5]{FA}),
infinite algebraic extensions of finite fields (\cite[Corollary 11.2.4]{FA}),
and the field $R(\sqrt{-1})$,
where $R$ is the field of totally real algebraic numbers
(\cite[Main Theorem]{GPR}).
It seems that PAC fields appear for the first time in \cite{Ax} (without an explicit name).

Since their appearance, PAC fields have been extensively studied and shown to have many more nice properties, e.g., they are  $\omega$-free if and only if they are Hilbertian (\cite[Theorem 5.10.3]{Patching}). Thus, it may come as no surprise that for PAC fields the answer to Jarden's question is positive:

\begin{theorem}\label{PAC p Hilbertianity}
Let $\Theta$ be a Sylowian map and let $K$ be a PAC $\Theta$-Hilbertian field with $\Gal(K)$ pro-$\C(\Theta)$.
Then $K$ is strongly $\Theta$-Hilbertian.
\end{theorem}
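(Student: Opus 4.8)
The plan is to reduce strong $\Theta$-Hilbertianity to $\Theta$-Hilbertianity by the field crossing argument, using PAC-ness to supply rational points and the pro-$\C(\Theta)$ hypothesis on $\Gal(K)$ to make the ``constant part'' of the problem disappear. Concretely, let $F/K(t)$ be an arbitrary finite Galois extension and put $A=\Gal(F/K(t))$; let $L$ be the algebraic closure of $K$ in $F$, so that $E:=K(t)$ is a regular extension of $K$, $F/E$ is geometric over $L$, and $\Gal(F/FL)$ is the ``geometric'' subgroup. I want to produce infinitely many $a\in K$ admitting a $K$-place $\varphi\colon F\to\overline F\cup\{\infty\}$ with $\varphi(t)=a$ and $[\overline F:\overline{F_\Theta}]=[F:F_\Theta]$, where $F_\Theta\in\Theta(F/E)$.

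The first step is to choose the right auxiliary extension $M/K$. Since $\Gal(K)$ is pro-$\C(\Theta)$ and $\C(\Theta)$ is closed under quotients, every finite quotient of $\Gal(K)$ lies in $\C(\Theta)$; in particular $\Gal(L/K)\in\C(\Theta)$, so $\Theta(\Gal(L/K))=\{\Gal(L/K)\}$. Now I would consider the constant embedding problem $(\res_{K_s/L}\colon\Gal(K)\to\Gal(L/K),\ \res_{F/L}\colon A\to\Gal(L/K))$ over $K(t)$. The point is that, because $K$ is PAC, one has a good supply of regular solutions to split constant embedding problems, but here the relevant subgroup is the geometric one $\Gal(F/FL)\in\Theta(F/FL)$ pushed through $A$. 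The key idea: pick $F_\Theta\in\Theta(F/E)$; by Definition~\ref{def; Sylowian map}(a) applied to $\res_{F/L}$, the image of $\Gal(F/F_\Theta)$ is a member of $\Theta(\Gal(L/K))=\{\Gal(L/K)\}$, so in fact $\res_{F/L}(\Gal(F/F_\Theta))=\Gal(L/K)$, i.e. $F_\Theta\cap L=K$ and $F_\Theta$ is linearly disjoint from $L$ over $K$ — hence $F_\Theta L$ corresponds to the geometric subgroup inside $\Gal(F/F_\Theta\cap E)$. So $F_\Theta/E$ is a Galois extension whose Galois group $\Gal(F/F_\Theta)$ lies in $\C(\Theta)$ after we descend to the right base field: more precisely, consider the field $F_\Theta$ itself; since $\Theta(\Gal(F/F_\Theta))=\{\Gal(F/F_\Theta)\}$ by Definition~\ref{def; Sylowian map}(b), the extension $F/F_\Theta$ has Galois group in $\C(\Theta)$. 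But $F_\Theta$ need not be a rational function field over a field, which is the obstruction I address next.

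The main technical move is the field crossing argument (Proposition~\ref{field-crossing(a)}). I want to find a regular subextension $D'$ of $FM$ over $K$ with $D'M=FM$ such that $F\cap D' = F_\Theta$, and such that $D'$ carries a $K$-place over $E$ that remains unramified. Here $M$ will be taken to be $L$ itself (or the fixed field of a chosen complement, if the relevant embedding problem over $K(t)$ is split and solvable — this is where PAC-ness of $K$, via \cite[Theorem 5.10.3]{Patching}-style results or directly the existence of regular solutions for PAC fields, enters). Granting such a $D'$: $D'/K$ is regular, and since $K$ is PAC, the variety with function field $D'$ has many $K$-rational points, i.e. there are infinitely many $K$-places $\varphi'\colon D'\to K\cup\{\infty\}$ with $\varphi'(t)=a$ ranging over infinitely many $a\in K$, all but finitely many unramified over $E$. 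Extend $\varphi'$ to an $M$-place $\varphi$ of $FM$, restrict to $\psi$ on $F$, and apply Proposition~\ref{field-crossing(a)}(b): $\psi^*(\Gal(K))=\Gal(F/F\cap D')=\Gal(F/F_\Theta)$. By Lemma~\ref{lem; ring cover1}(b) applied to the Galois extension $F/F_\Theta$ with place $\psi$ (noting $\overline{F_\Theta}=K$ since $F_\Theta\subseteq D'$ and $\overline{D'}=K$), surjectivity of $\psi^*$ onto $\Gal(F/F_\Theta)$ is equivalent to $[\overline F:\overline{F_\Theta}]=[F:F_\Theta]$, which is exactly what membership $a\in H_{K,\Theta}(F)$ requires.

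The step I expect to be the genuine obstacle is constructing the intermediate field $D'$ with $F\cap D'=F_\Theta$ that is \emph{regular over $K$} and admits the required unramified $K$-place — equivalently, showing that the relevant constant embedding problem over $K(t)$ has a regular solution landing precisely in (a conjugate of) $\Gal(F/F_\Theta)$. This is where both hypotheses are used essentially: pro-$\C(\Theta)$-ness of $\Gal(K)$ guarantees that the obstruction to descending lives in $\C(\Theta)$ and is already visible over $K$ (so it can be matched), while PAC-ness provides the regular solutions and the rational points. I would organize this via the field crossing diagram~(\ref{groups crosing diagram}) with $\gamma$ a suitable lift of $\res_{K_s/L}$ into $\Gal(F/E)$ landing in $\Gal(F/F_\Theta)$; producing this $\gamma$ is the crux, and I would expect to invoke that PAC fields are ``projective enough'' — every finite embedding problem for $\Gal(K(t))$ of the appropriate geometric type has a regular solution — together with the $\Theta$-Hilbertianity of $K$ itself to iterate down the tower if $\Gal(F/F_\Theta)$ is not reached in one step. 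Once $D'$ is in hand, the rest is a direct application of the two cited results and is essentially bookkeeping.
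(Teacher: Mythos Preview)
Your overall strategy --- field crossing plus PAC rational points --- matches the paper's, and your reduction showing $F_\Theta$ is regular over $K$ (via $\res_{F/L}(\Gal(F/F_\Theta)) \in \Theta(\Gal(L/K)) = \{\Gal(L/K)\}$) is exactly right. The gap is at the step you yourself flag as the crux: constructing the epimorphism $\gamma\colon \Gal(K) \twoheadrightarrow B := \Gal(F/F_\Theta)$ with $\res_{F/L}\circ\gamma = \res_{K_s/L}$.

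Two concrete corrections. First, taking $M = L$ cannot work: Proposition~\ref{field-crossing(a)}(a) requires $\Gal(M) \le \ker(\gamma)$, so with $M = L$ the map $\gamma$ would factor through $\Gal(L/K)$ and could never surject onto $B$ unless $|B| = |\Gal(L/K)|$. The correct choice is $M =$ the fixed field of $\ker(\gamma)$, determined \emph{after} $\gamma$ is built. Second --- and this is the missing idea --- the construction of $\gamma$ is a clean two-step process with no iteration: (i)~solve the constant embedding problem $(\res_{K(t)_s/L}\colon \Gal(K(t)) \to \Gal(L/K),\ \alpha\colon B \to \Gal(L/K))$ over $K(t)$ --- the paper invokes \cite[Proposition~3.3]{FLP} for this --- obtaining a finite Galois extension $N/K(t)$ containing $L$ together with an isomorphism $\theta\colon \Gal(N/K(t)) \to B$ over $\Gal(L/K)$; (ii)~since $B \in \C(\Theta)$ by Definition~\ref{def; Sylowian map}(b), the hypothesis of $\Theta$-Hilbertianity applies \emph{to $N$}, yielding a $K$-place $\rho$ with $[\overline N : K] = [N:K(t)]$, so that $\rho^*\colon \Gal(K) \to \Gal(N/K(t))$ is surjective by Lemma~\ref{lem; ring cover1}(b), and $\gamma := \theta \circ \rho^*$ is the desired epimorphism. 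Thus $\Theta$-Hilbertianity is used on the auxiliary extension $N$ (whose Galois group is already in $\C(\Theta)$), not on $F$; the PAC hypothesis enters only afterwards, to supply infinitely many $K$-places of the field $D$ produced by Proposition~\ref{field-crossing(a)}(a).
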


\begin{proof}
Let $F/K(t)$ be a finite Galois extension.
We need to show that $H_{K,\Theta}(F)$ is infinite. 

Let $L$ be the algebraic closure of $K$ in $F$
and denote $A=\Gal(L/K)$.
The restriction maps
$\res_{F/L}\colon \Gal(F/K(t))\to A$
and
$\res_{K_s/L}\colon \Gal(K)\to A$
are both onto $A$.
Let $F_\Theta\in\Theta(F/K(t))$
and let $B=\Gal(F/F_\Theta)$.
As $\Gal(K)$ is pro-$\C(\Theta)$,
its image $A$ is in $\C(\Theta)$,
i.e. $\Theta(A)=\{A\}$.
By Definition~\ref{def; Sylowian map}(a),
$\res_{F/L}(B)\in \Theta(A)$.
Hence, $\res_{F/L}(B)=A$.
Thus, $F_\Theta$ and $L$ are linearly disjoint over $K$,
and hence $F_\Theta$ is regular over $K$.
Let $\alpha\colon B\to A$ be the restriction of $\res_{F/L}$ to $B$. 

By \cite[Proposition 3.3]{FLP} the constant embedding problem $(\res_{K(t)_s/L}\colon \Gal(K(t)) \to  A, \alpha\colon B\to A)$ has a solution. Thus, there exists a Galois extension $N$ of $K(t)$ containing $L$ and an isomorphism $\theta\colon\Gal(N/K(t))\to B$ such that $\alpha\circ\theta=\res_{N/L}$.

As $K$ is $\Theta$-Hilbertian and $B\in\C(\Theta)$, there exists a $K$-place $\rho\colon N\to K_s\cup\{\infty\}$ such that $\overline{K(t)}=K$ and $[\overline{N}:K]=[N:K(t)]$. It follows from Lemma \ref{lem; ring cover1}(a) and (b) that $\rho^*\colon\Gal(K)\to\Gal(N/K(t))$ is surjective and satisfies $\res_{N/L}\circ\rho^*=\res_{K_s/L}$. Then $\gamma=\theta\circ\rho^*\colon\Gal(K)\to B$ is an epimorphism such that $\alpha\circ\gamma=\res_{K_s/L}$. 

Let $M$ be the fixed field of $\ker(\gamma)$ in $K_s$. Thus, $M/K$ is a finite Galois extension. Since $\alpha\circ\gamma=\res_{K_s/L}$,\ $M$ contains $L$. By Proposition~\ref{field-crossing(a)}(a) there is a regular extension $D/K$ such that $F_\Theta\subseteq D$ and $DM=FM$. Furthermore, $\Gal(F/F_\Theta)=B=\gamma(\Gal(K))=\Gal(F/F\cap D)$. Thus, $F\cap D=F_\Theta$.

As $K$ is PAC, there exist infinitely many $K$-places $\varphi'\colon D\to K\cup\{\infty\}$.
Extend such $\varphi'$ to an $M$-place of $DM=FM$ and let $\varphi$ be its restriction to $F$.
Since only finitely $K$-places of $FM$ are ramified over $K(t)$ (\cite[p.~59]{FA}), we may assume that 
$\varphi'$ and therefore also $\varphi$, is unramified over $K(t)$. By Proposition~\ref{field-crossing(a)}(b), $\varphi^*\colon\Gal(K)\to\Gal(F/F_\Theta)$ is an epimorphism. Thus, by Lemma \ref{lem; ring cover1}(b) with respect to $\varphi$, \ $[F:F_\Theta]=[\overline{F}:\overline{F_\Theta}]$. 
\end{proof} 

\subsection{PRC fields}

In this section we generalize Theorem \ref{PAC p Hilbertianity} to PRC fields. Let us introduce the notation and recall the necessary facts needed to this end (mostly taken from \cite{FHV}):
An extension $E/K$ of fields is \textbf{totally real} if every ordering on $K$ extends to $E$.
A field $K$ is called \textbf{pseudo real closed (PRC)} if every absolutely irreducible variety defined over $K$ has a $K$-rational point, provided it has a non-singular point over every real closure of $K$.
Equivalently, for every finitely generated totally real and regular extension $E/K$ there exist infinitely many $K$-places $\varphi\colon E\to K\cup\{\infty\}$.

Let $K$ be a field. By \cite[\S 6]{Prestel}, the set of orderings $X(K)$ of $K$ is a compact, Hausdorff and totally disconnected topological space with the topology given by a subbase consisting of sets of the form $H(c) = \{P\in X(K)\,|\, c\in P\}$ for $c\in K^\times$. Here $P$ denotes the positive elements in an ordering. Each of the sets $H(c)$ is open and closed (\textbf{clopen}).

A subset $I$ of a group $G$ is called a \textbf{conjugacy domain} if $I=\bigcup_{\sigma\in G} I^\sigma$. 

A field $K$ is called \textbf{formally real} if $K$ admits at least one ordering. A formally real field is of characteristic~$0$. In what follows $F/E$ is a Galois extension of fields with $F$ not formally real. An involution $\varepsilon\in\Gal(F/E)$ (that is, an element of order~$2$) is \textbf{real} if its fixed field $F(\varepsilon)$ in $F$ is formally real. Denote the set of real involutions of $\Gal(F/E)$ by $I(F/E)$. This is a closed subset of $\Gal(F/E)$. By Artin-Schreier theory, every involution $\varepsilon$ of $\Gal(E)$ is real and self-centralizing, that is, $\{\sigma \in \Gal(E)\; |\; \varepsilon \sigma = \sigma \varepsilon\} = \{1,\varepsilon\}$.

Suppose $E/K$ is totally real and let $P\in X(K)$. Denote the set of involutions $\varepsilon\in I(F/E)$ for which $P$ extends to an ordering of $F(\varepsilon)$ by $I_P(F/E)$. For $X\subseteq X(K)$ let $I_X(F/E)=\bigcup_{P\in X} I_P(F/E)$. If $F$ is the algebraic closure of $E$, we write $I_P(E)$ for $I_P(F/E)$, etc.

\begin{rem}\label{rem; orderings}
(a) Suppose $E=K$. Then $I_P(F/K)$ is a conjugacy class in $\Gal(F/K)$ (\cite[Remark 1.8(a)]{FHV}). In the general case, $I_P(F/E)$ is a conjugacy domain in $\Gal(F/E)$. In fact, $I_P(F/E)=\bigcup_{\substack{Q\in X(E) \\ Q\supseteq P}} I_Q(F/E)$.

(b) Let $I\subseteq I(F/E)$ be a conjugacy domain. From \cite[Theorems 4.1 and 4.9]{ELW} it follows that for every $\epsilon\in I(F/E)$ the set $X_\epsilon=\{P\in X(K)\;|\;P \text{ extends to } F(\epsilon)\}$ is clopen in $X(K)$. Hence, $$\{P\in X(K)\;|\; I_P(F/E)=I\}=\bigcap_{\epsilon\in I}X_\epsilon\smallsetminus \bigcup_{\epsilon\in I(F/E)\smallsetminus I}X_\epsilon$$ is clopen in $X(K)$.

(c) Suppose $E=K(t)$. Then there exists a polynomial $g\in K[t]$ such that for every $a\in K$ with $g(a)\neq 0$ the map $t\mapsto a$ extends to a $K$-place $\varphi\colon F\to K_s\cup\{\infty\}$ with $\varphi^*(I_P(K))\subseteq I_P(F/E)$ for every $P\in X(K)$ (\cite[Remark 6.2(b)]{FHV}).
\end{rem}

Finally, let us recall the PRC counterpart of \cite[Proposition 3.3]{FLP}:

\begin{lem}{\cite[Theorem 5.2]{FHV}}\label{lem; 5.2}
Let $K$ be a PRC field.
Let $L/K$ be a finite Galois extension with $L$ not formally real and let $\pi\colon H\to\Gal(L/K)$ be an epimorphism of finite groups. Let $X_1, \ldots, X_m$ be a partition of $X(K)$ into disjoint clopen sets. For each $1\leq j\leq m$ let $I_j\subseteq H$ be a conjugacy domain of involutions such that $\pi(I_j)=I_{X_j}(L/K)$. Then there exists a regular extension $F$ of $L$, Galois over $K(t)$, and an isomorphism $h_1\colon\Gal(F/K(t))\to H$ that maps $I_{X_j}(F/K(t))$ onto $I_j$ that makes the following diagram commutative:

$$
\xymatrix{
\Gal(F/K(t)) \ar[dr]_{\res_{F/L}} \ar[rr]^{h_1} && H \ar[dl]^{\pi} \\
& \Gal(L/K) \\
}
$$
\end{lem}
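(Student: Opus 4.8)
The plan is to read the lemma as a \emph{real} realization result: it asserts that the constant embedding problem $(\res_{F/L}\colon\Gal(F/K(t))\to\Gal(L/K),\ \pi\colon H\to\Gal(L/K))$ over $K(t)$ admits a regular solution whose induced extension $F/L$ is regular \emph{and} carries a prescribed distribution of orderings, namely $I_{X_j}(F/K(t))=I_j$ on each clopen piece. It is the PRC analogue of \cite[Proposition 3.3]{FLP}, the extra content over the PAC case being the control of real involutions; the hypothesis $\pi(I_j)=I_{X_j}(L/K)$ is exactly the compatibility making that control possible. So I would produce an epimorphism $\Gal(K(t))\to H$ lifting $\res_{K(t)_s/L}$, take $F$ to be the fixed field of its kernel, let $h_1$ be the resulting isomorphism $\Gal(F/K(t))\to H$, and then verify regularity and the real data.

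First I would realize $H$ geometrically. Over $\tilde K$ every finite group is the Galois group of a branched cover of $\mathbb P^1$, and one has freedom in the choice of branch points and of the inertia generators attached to them. I would place the branch points on the real locus and choose the inertia data so that the involutions induced by complex conjugation fall into the conjugacy domains prescribed by the $I_j$; this produces a regular extension $F$ of $L$, Galois over $K(t)$ with group $H$ and compatible with $\pi$. The positions of the branch points relative to $X(K)$ and the local monodromy are the parameters I will tune to govern $I_{X_j}(F/K(t))$.

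Next I would descend this cover to $K(t)$ and match the orderings, using that $K$ is PRC. The PRC property --- in the form that every finitely generated totally real regular extension of $K$ has infinitely many $K$-places --- provides the real-rational point on the relevant parameter (Hurwitz-type) space needed to define the cover over $K(t)$ while keeping the chosen real behavior. The matching $I_{X_j}(F/K(t))=I_j$ is then a clopen, piecewise computation: by Remark~\ref{rem; orderings}(b) the locus of orderings $P$ with a fixed value of $I_P(F/K(t))$ is clopen in $X(K)$, so over the finite partition $X_1,\dots,X_m$ the global identity reduces to the local behavior at the real branch points arranged in the previous step, and $\pi(I_j)=I_{X_j}(L/K)$ keeps everything consistent with the already-realized quotient $\Gal(L/K)$.

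The main obstacle is to carry out the descent and the real matching \emph{simultaneously}: one must solve the (possibly non-split) constant embedding problem regularly over $K(t)$ and at the same time force the real involutions into the exact conjugacy domains $I_j$ over each clopen set, rather than merely controlling a single conjugacy class. Pinning down conjugacy \emph{domains} across several clopen pieces, so that the orderings of $F$ distribute precisely as prescribed while remaining compatible with $\pi$, is the delicate real-algebraic heart of the argument; it is where the real projectivity of $\Gal(K)$ for PRC $K$ and the ELW clopen-ness results of Remark~\ref{rem; orderings}(b) must be combined.
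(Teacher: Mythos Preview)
The paper does not prove this lemma at all: it is quoted verbatim as \cite[Theorem~5.2]{FHV} and used as a black box, with no argument supplied. So there is nothing in the paper to compare your proposal against.

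That said, your sketch is in the right spirit for how the result is actually obtained in \cite{FHV}: one constructs a cover of $\mathbb{P}^1$ with prescribed real branch behavior and then uses the PRC hypothesis (via real projectivity of $\Gal(K)$ and a moduli/patching argument) to descend while controlling the real involutions on each clopen piece. But as a self-contained proof your proposal is only an outline, not an argument: the step ``place branch points on the real locus and choose inertia data so that complex conjugation acts by the prescribed involutions'' hides all the work --- one must actually produce, for \emph{each} ordering $P$ in each $X_j$, a cover whose real structure over the corresponding real closure realizes the given conjugacy domain $I_j$, and then glue these compatibly. You also invoke a ``Hurwitz-type parameter space'' and ``real-rational points'' without saying which variety or why it is totally real and absolutely irreducible, which is exactly what is needed to apply PRC. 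If you intend to supply a proof rather than cite \cite{FHV}, you would need to make those constructions explicit; otherwise, simply citing the reference as the paper does is the honest move.
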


\begin{lem}\label{kill centralizers}
Let $G=\varprojlim_{i\in I} G_i$ be an inverse limit of finite groups with canonical projections $\pi_i\colon G\to G_i$ and connecting epimorphisms $\pi_{ji}\colon G_j\to G_i$ for $j\geq i$. Let $\mathcal{I}$ be the set of involutions of $G$ and suppose that every $\delta\in\mathcal{I}$ is self-centralizing. Let $i\in I$ be such that $1\notin\pi_i(\mathcal{I})$. 
Then there exists $j\geq i$ such that for every $\delta\in \mathcal{I}$ the image of the centralizer of $\pi_j(\delta)$ in $G_i$ under $\pi_{ji}$ is  $\{1,\pi_i(\delta)\}$.
\end{lem}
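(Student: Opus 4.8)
The plan is to exploit a compactness argument. First I would reformulate the desired conclusion: for each $\delta\in\mathcal I$, since $\delta$ is self-centralizing, its centralizer $Z_G(\delta)$ equals $\{1,\delta\}$. The centralizer of $\pi_j(\delta)$ in $G_j$ contains $\pi_j(Z_G(\delta))=\{1,\pi_j(\delta)\}$, but may a priori be strictly larger; pushing down to $G_i$ via $\pi_{ji}$ we want to kill the extra elements, obtaining exactly $\{1,\pi_i(\delta)\}$ (note $\pi_i(\delta)\neq1$ by the hypothesis $1\notin\pi_i(\mathcal I)$, so this is genuinely a two-element set).

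Next, for a fixed index $j\geq i$ and a fixed $\delta$, let $C_j(\delta)=Z_{G_j}(\pi_j(\delta))$ and consider the "bad set" $B_{j}=\{\delta\in\mathcal I : \pi_{ji}(C_j(\delta))\neq\{1,\pi_i(\delta)\}\}$, equivalently the set of $\delta$ for which there exists $g\in G$ with $\pi_j(g)$ commuting with $\pi_j(\delta)$ but $\pi_i(g)\notin\{1,\pi_i(\delta)\}$. I would like to show $B_j=\emptyset$ for some $j$. For this, I would run the following compactness/inverse-limit argument. Suppose toward a contradiction that $B_j\neq\emptyset$ for all $j\geq i$. The sets $B_j$ are closed in the compact space $\mathcal I$ (the conditions "$\pi_j(g)$ commutes with $\pi_j(\delta)$" and "$\pi_i(g)\notin\{1,\pi_i(\delta)\}$" are closed/clopen conditions, and projecting out $g$ over the compact group $G$ preserves closedness), and they are nested decreasing in $j$ because if $\pi_{j'i}(C_{j'}(\delta))=\{1,\pi_i(\delta)\}$ for some $j'\geq j$ then the same holds at level $j$ since $C_j(\delta)\supseteq\pi_{j'j}(C_{j'}(\delta))$ and hence $\pi_{ji}(C_j(\delta))\subseteq\dots$—wait, I must be careful about the direction of the inclusion, so let me instead argue directly with the finite intersection property on the pairs $(\delta,g)$.

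Concretely, for each $j\geq i$ define $P_j=\{(\delta,g)\in\mathcal I\times G : \pi_j(g)\in Z_{G_j}(\pi_j(\delta)),\ \pi_i(g)\notin\{1,\pi_i(\delta)\}\}$. Each $P_j$ is closed in the compact space $\mathcal I\times G$, and $B_j\neq\emptyset$ means $P_j\neq\emptyset$. The family $\{P_j\}_{j\geq i}$ is downward directed: for $j_1,j_2\geq i$ pick $j_3\geq j_1,j_2$ and note $P_{j_3}\subseteq P_{j_1}\cap P_{j_2}$ because $\pi_{j_3}(g)$ commuting with $\pi_{j_3}(\delta)$ forces $\pi_{j_1}(g)$ to commute with $\pi_{j_1}(\delta)$, etc. By compactness $\bigcap_{j\geq i}P_j\neq\emptyset$; pick $(\delta,g)$ in it. Then for every $j$, $\pi_j(g)$ commutes with $\pi_j(\delta)$, so $g$ commutes with $\delta$ in $G=\varprojlim G_j$, i.e. $g\in Z_G(\delta)=\{1,\delta\}$; hence $\pi_i(g)\in\{1,\pi_i(\delta)\}$, contradicting $(\delta,g)\in P_i$. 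Therefore some $P_j$ is empty, equivalently $B_j=\emptyset$, and this $j$ works. I would finish by remarking that $\pi_j(Z_G(\delta))=\{1,\pi_j(\delta)\}\subseteq C_j(\delta)$ always, so $\{1,\pi_i(\delta)\}\subseteq\pi_{ji}(C_j(\delta))$ automatically, and the argument above supplies the reverse inclusion.

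I expect the main obstacle to be the bookkeeping in the compactness step—specifically verifying that the relevant sets are closed (this uses that $G$ is profinite hence compact, and that $Z_{G_j}(\pi_j(\delta))$ depends on $\delta$ only through $\pi_j(\delta)$, which is a locally constant function of $\delta$) and that the family is genuinely downward directed so that the finite intersection property applies. A minor subtlety worth spelling out is the use of the hypothesis $1\notin\pi_i(\mathcal I)$: it guarantees $\pi_i(\delta)\neq1$ so that the target set $\{1,\pi_i(\delta)\}$ has two elements and the statement is the expected one; without it the conclusion would degenerate. Everything else is routine profinite-group manipulation.
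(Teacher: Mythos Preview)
Your argument is correct and essentially identical to the paper's: both run a compactness argument on the closed sets of pairs $(\delta,\sigma)$ in (a compact subset of) $G\times G$ satisfying $\pi_j(\sigma\delta)=\pi_j(\delta\sigma)$ and $\pi_i(\sigma)\notin\{1,\pi_i(\delta)\}$, use directedness to get the finite intersection property, and derive a contradiction from the self-centralizing hypothesis. The only cosmetic difference is that the paper writes the conditions $\delta^2=1$, $\pi_i(\delta)\neq 1$ explicitly inside $G\times G$, whereas you work in $\mathcal I\times G$; note that $\mathcal I$ is indeed closed (hence compact) precisely because of the hypothesis $1\notin\pi_i(\mathcal I)$, which gives $\mathcal I=\{\delta:\delta^2=1\}\cap\pi_i^{-1}(G_i\smallsetminus\{1\})$.
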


\begin{proof}
For every $j\in I$ with $j\geq i$ denote
\begin{equation*}
D_j = \{(\delta,\sigma) \in G\times G\; |\; \delta^2 = 1, \pi_i(\delta)\neq 1, \pi_i(\sigma)\notin\{1, \pi_i(\delta)\}, \pi_j(\sigma\delta)=\pi_j(\delta\sigma)\}.
\end{equation*}
This is a closed subset of $G \times G$ and if $j'\geq j$, then $D_{j'}\subseteq D_j$. 

Let 
$\delta\in \mathcal{I}$. Suppose there exists $\tau\in G_j$ that centralizes $\pi_j(\delta)$ but $\pi_{ji}(\tau)\notin\{1, \pi_i(\delta)\}$. Then $(\delta,\sigma)\in D_j$ for every $\sigma\in G$ with $\pi_j(\sigma)=\tau$. In particular, $D_j\neq\emptyset$. Thus, it suffices to show that there exists $j\geq i$ with $D_j = \emptyset$.

Let $(\delta,\sigma)\in \bigcap_{j\geq i} D_j$. As $\delta^2 = 1$ and $\pi_i(\delta)\neq 1$, we have $\delta\in \mathcal{I}$. Furthermore, $\pi_j(\sigma\delta)=\pi_j(\delta\sigma)$ for every $j\geq i$. Thus, $\sigma\delta=\delta\sigma$ and therefore $\sigma$ centralizes $\delta$. By assumption, $\sigma\in\{1,\delta\}$, contrary to the definition of $D_j$. Thus, $\bigcap_{j\geq i} D_j=\emptyset$. Since $G\times G$ is compact, there exist $i\leq j_1,\ldots,j_n\in I$ such that $\bigcap_{1\leq k\leq n} D_{j_k}=\emptyset$. Let $j\in I$ be such that $j\geq\max\{j_1,\ldots,j_n\}$. Then $D_j\subseteq \bigcap_{1\leq k\leq n}D_{j_k}$ and therefore $D_j=\emptyset$.
\end{proof}

\begin{cor}\label{kill centralizers of real involutions}
Let $L/K$ be a finite Galois extension. 
Then there exists a finite Galois extension $L'$ of $K$ containing $L$ such that the restriction to $L$ of the centralizer of every real involution $\varepsilon \in I(L'/K)$ is $\{1,\res_{L'/L}\varepsilon\}\leq\Gal(L/K)$.
\end{cor}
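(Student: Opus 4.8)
The plan is to derive the corollary from Lemma~\ref{kill centralizers}, applied to the inverse limit $\Gal(K)=\varprojlim_E\Gal(E/K)$ over the finite Galois subextensions $E/K$ of $K_s/K$ (directed by inclusion), with $\pi_E=\res_{K_s/E}$ and connecting maps $\res_{E'/E}$. The set $\mathcal{I}$ in the lemma will be the set of all involutions of $\Gal(K)$; these are self-centralizing by Artin--Schreier theory, so the hypothesis of the lemma is satisfied. Before invoking it I would reduce to the case in which $L$ is not formally real: if $L$ is formally real then it has characteristic $0$, and it is enough to prove the corollary with $L(\sqrt{-1})$ — a finite Galois extension of $K$ containing $L$ which is not formally real — in place of $L$, because if $L'$ works for $L(\sqrt{-1})$ then, composing the centralizer images with $\res_{L(\sqrt{-1})/L}$ and using $\res_{L'/L}=\res_{L(\sqrt{-1})/L}\circ\res_{L'/L(\sqrt{-1})}$, the same $L'$ works for $L$. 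From now on $L$, and hence every finite extension of $K$ containing it, is not formally real.

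Let $i$ be the index corresponding to $L$. Since $L$ is not formally real, $\Gal(L)$ contains no involution (a field has an involution in its absolute Galois group precisely when it is formally real), so $1\notin\pi_i(\mathcal{I})$. Lemma~\ref{kill centralizers} then yields a finite Galois extension $L'\supseteq L$ of $K$ with the property that for every involution $\delta$ of $\Gal(K)$ the image under $\res_{L'/L}$ of the centralizer of $\res_{K_s/L'}(\delta)$ in $\Gal(L'/K)$ equals $\{1,\res_{K_s/L}(\delta)\}$. I claim this $L'$ is as required; the one nontrivial point, and the step I expect to be the main obstacle, is to show that \emph{every} real involution of $\Gal(L'/K)$ arises as $\res_{K_s/L'}(\delta)$ for some involution $\delta$ of $\Gal(K)$.

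To establish this I would argue as follows. Let $\varepsilon\in I(L'/K)$; by definition its fixed field $L'(\varepsilon)$ in $L'$ is formally real. Fix an ordering of $L'(\varepsilon)$ and a real closure $R\subseteq K_s$ of it; since $R$ is real closed, $K_s=R(\sqrt{-1})$, so $\Gal(K_s/R)=\{1,\delta\}$ for an involution $\delta$ of $\Gal(K)$ (note $K\subseteq L'(\varepsilon)\subseteq R$). As $\delta$ fixes $L'(\varepsilon)$, its restriction $\res_{K_s/L'}(\delta)$ lies in $\Gal(L'/L'(\varepsilon))=\{1,\varepsilon\}$, and it cannot be $1$: otherwise $L'\subseteq K_s^{\langle\delta\rangle}=R$ would be formally real, contradicting our reduction. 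Hence $\res_{K_s/L'}(\delta)=\varepsilon$, and since $\res_{K_s/L}(\delta)=\res_{L'/L}(\varepsilon)$, the conclusion of Lemma~\ref{kill centralizers} applied to this $\delta$ gives that the restriction to $L$ of the centralizer of $\varepsilon$ in $\Gal(L'/K)$ is $\{1,\res_{L'/L}\varepsilon\}$, as desired.
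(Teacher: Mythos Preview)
Your proof is correct and follows essentially the same route as the paper: reduce to $L$ not formally real by adjoining $\sqrt{-1}$, then apply Lemma~\ref{kill centralizers} to $\Gal(K)$ with its system of finite Galois quotients. You spell out in detail the lifting step---that every $\varepsilon\in I(L'/K)$ is the restriction of some involution of $\Gal(K)$---which the paper leaves implicit in the phrase ``the assertion now follows.''
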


\begin{proof}
By Artin-Schreier theory, there are involutions in $G=\Gal(K)$ only if $\textnormal{char}(K)=0$. Furthermore, every involution is self-centralizing. Adjoin $\sqrt{-1}$ to $L$ to assume that $L$ is not formally real and therefore no involution of $G$ restricts to $1\in \Gal(L/K)$. The assertion now follows from Lemma \ref{kill centralizers}.
\end{proof}

\begin{lem}\label{epimorphism onto Sylow subgroup}
Let $\Theta$ be a Sylowian map and let $K$ be a PRC $\Theta$-Hilbertian field such that $\Gal(K)$ is pro-$\C(\Theta)$. Let $F/K(t)$ be a finite Galois extension such that the algebraic closure $L$ of $K$ in $F$ is not formally real. Let $E\in\Theta(F/K(t))$  such that the extension $E/K$ is totally real. Then there exists an epimorphism $\gamma\colon \Gal(K)\to\Gal(F/E)$ such that $\res_{F/L}\circ\gamma= \res_{K_s/L}$ and $\gamma(I_P(K))\subseteq I_P(F/E)$ for every $P\in X(K)$.
\end{lem}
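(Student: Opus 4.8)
The plan is to mirror the proof of Theorem~\ref{PAC p Hilbertianity}, but tracking orderings and real involutions throughout, and using the PRC input Lemma~\ref{lem; 5.2} in place of \cite[Proposition 3.3]{FLP}. Write $A=\Gal(L/K)$, $B=\Gal(F/E)$, and let $\alpha\colon B\to A$ be the restriction of $\res_{F/L}$. As in the PAC proof, since $\Gal(K)$ is pro-$\C(\Theta)$ its image $A$ lies in $\C(\Theta)$, so $\Theta(A)=\{A\}$; by Definition~\ref{def; Sylowian map}(a), $\res_{F/L}(B)\in\Theta(A)=\{A\}$, hence $\alpha$ is onto, $E$ and $L$ are linearly disjoint over $K$, and $E/K$ is regular. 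Since $E/K$ is also totally real by hypothesis, for each $P\in X(K)$ Remark~\ref{rem; orderings}(a) says $I_P(F/E)$ is a conjugacy domain of involutions in $B$; and since $\alpha$ is an epimorphism compatible with restriction, $\alpha(I_P(F/E))=I_P(L/K)$ (the real involutions of $L/K$ over which $P$ extends — here one uses that $E/K$ totally real forces every ordering of $K$ to extend to $E$, and then up to $F(\varepsilon)$).

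Next I would choose a finite clopen partition $X(K)=X_1\sqcup\cdots\sqcup X_m$ refining the finitely many distinct sets $P\mapsto I_P(F/E)$; this is possible because by Remark~\ref{rem; orderings}(b) each set $\{P\mid I_P(F/E)=I\}$ is clopen, and $I(F/E)$ being finite there are only finitely many values. Put $I_j=I_{X_j}(F/E)$, a conjugacy domain of involutions in $B$ with $\alpha(I_j)=I_{X_j}(L/K)$ (constant on $X_j$ by construction). Apply Lemma~\ref{lem; 5.2} with $H=B$, $\pi=\alpha$, and this partition: we get a regular extension $N$ of $L$, Galois over $K(t)$, and an isomorphism $h_1\colon\Gal(N/K(t))\to B$ carrying $I_{X_j}(N/K(t))$ onto $I_j$ and satisfying $\alpha\circ h_1=\res_{N/L}$. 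Now $B=\Gal(F/E)\in\C(\Theta)$ (this is where Definition~\ref{def; C of Theta} and the hypothesis $E\in\Theta(F/K(t))$ enter), so $\Theta$-Hilbertianity of $K$ is not enough by itself — but in fact $N/K(t)$ has Galois group $B\in\C(\Theta)$, so $H_{K,\Theta}(N)$ is infinite. The point now is to pick $a\in H_{K,\Theta}(N)$ so that the associated place also behaves well with respect to orderings: combine the infinitude of $H_{K,\Theta}(N)$ with Remark~\ref{rem; orderings}(c), which furnishes a nonzero $g\in K[t]$ such that for $a$ with $g(a)\neq0$ the specialization place $\rho\colon N\to K_s\cup\{\infty\}$ satisfies $\rho^*(I_P(K))\subseteq I_P(N/K(t))$ for every $P\in X(K)$. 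Since $H_{K,\Theta}(N)$ is infinite we may pick such an $a$ with $g(a)\neq0$, giving $\rho$ with $\overline{K(t)}=K$, $[\overline N:K]=[N:K(t)]$, hence (Lemma~\ref{lem; ring cover1}(a),(b)) $\rho^*\colon\Gal(K)\to\Gal(N/K(t))$ surjective with $\res_{N/L}\circ\rho^*=\res_{K_s/L}$, and additionally $\rho^*(I_P(K))\subseteq I_P(N/K(t))$ for all $P$.

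Finally set $\gamma=h_1\circ\rho^*\colon\Gal(K)\to B=\Gal(F/E)$. It is an epimorphism; $\alpha\circ\gamma=\alpha\circ h_1\circ\rho^*=\res_{N/L}\circ\rho^*=\res_{K_s/L}$, i.e. $\res_{F/L}\circ\gamma=\res_{K_s/L}$; and for each $P\in X(K)$, since $P$ lies in exactly one $X_j$, $\gamma(I_P(K))=h_1(\rho^*(I_P(K)))\subseteq h_1(I_P(N/K(t)))\subseteq h_1(I_{X_j}(N/K(t)))=I_j=I_{X_j}(F/E)$, and one must then check $I_{X_j}(F/E)$'s $P$-component is $I_P(F/E)$ — which holds because the partition was chosen to refine $P\mapsto I_P(F/E)$, so $I_{X_j}(F/E)=I_P(F/E)$ for that $j$. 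This gives $\gamma(I_P(K))\subseteq I_P(F/E)$ as required.

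The main obstacle is bookkeeping with the orderings: one must verify that the clopen partition can simultaneously be made constant for $P\mapsto I_P(F/E)$ and $P\mapsto I_P(L/K)$ and compatible under $\alpha$ (the latter two follow from the former since $\alpha$ is onto and surjections of Galois groups map real involutions to real involutions over compatible orderings), and one must be careful that applying $h_1$ and then restricting does not leave the target conjugacy domain — this is exactly what the ``maps $I_{X_j}(N/K(t))$ onto $I_j$'' clause of Lemma~\ref{lem; 5.2} is for, combined with the ordering-compatibility clause of Remark~\ref{rem; orderings}(c). The regularity and linear-disjointness verifications, and the fact that $B\in\C(\Theta)$, are routine given the earlier results.
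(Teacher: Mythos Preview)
Your proof is correct and follows essentially the same route as the paper's: partition $X(K)$ into clopen pieces on which the real-involution data is constant, apply Lemma~\ref{lem; 5.2} to realize $B=\Gal(F/E)$ over $K(t)$ with prescribed real involutions, specialize via $\Theta$-Hilbertianity combined with Remark~\ref{rem; orderings}(c), and compose. The only difference is cosmetic: the paper first picks a continuous section $\rho\colon X(K)\to X(E)$ and takes each $I_j$ to be the single conjugacy \emph{class} $I_{\rho(P)}(F/E)\subseteq I_P(F/E)$, whereas you take the full conjugacy domain $I_j=I_P(F/E)$; since Lemma~\ref{lem; 5.2} accepts conjugacy domains, both choices work and yours sidesteps the need for the section.
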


\begin{proof}
By assumption, $E/K$ is totally real and therefore the restriction map $X(E) \to X(K)$ is surjective.
Let $\rho \colon X(K) \to X(E)$ be a continuous section (\cite[Proposition 8.2]{HJ-prc}) of this map and let $P\in X(K)$. By Remark \ref{rem; orderings}(b), if $P'\in X(K)$ is sufficiently close to $P$ then $I_{\rho(P')}(F/E) = I_{\rho(P)}(F/E)$. Thus, we may construct a partition of $X(K)$ into disjoint clopen subsets $X_1,\ldots, X_m$, and, for each $1\leq j\leq m$, a conjugacy class of involutions $I_j\subseteq I(F/E)$ such that every $P\in X_j$ extends to $F(\varepsilon)$ for every $\varepsilon\in I_j$. In particular, $P$ extends to $F(\varepsilon)\cap L$ and therefore $I_P(L/K)\cap\res_{F/L}(I_j)\neq\emptyset$. By Remark \ref{rem; orderings}(a), $I_P(L/K)$ is a conjugacy class. By assumption, $\Gal(K)$ is pro-$\C(\Theta)$ and therefore the restriction of $\res_{F/L}$ to $\Gal(F/E)$ is surjective. It follows that also $\res_{F/L}(I_j)$ is a conjugacy class. Conclude that $I_P(L/K)=\res_{F/L}(I_j)$. Finally, $I_{X_j}(L/K)=\bigcup_{P\in X_j}I_P(L/K)=\res_{F/L}(I_j)$.

By Lemma \ref{lem; 5.2}, there exist a Galois extension $F'$ of $K(t)$ containing $L$ and an isomorphism $h\colon\Gal(F'/K(t)) \to \Gal(F/E)$ such that the following diagram commutes
$$
\xymatrix{
\Gal(F'/K(t)) \ar[dr]_{\res_{F'/L}} \ar[rr]^{h} && \Gal(F/E) \ar[dl]^{\res_{F/L}} \\
& \Gal(L/K) \\
}
$$
and $h(I_{X_j}(F'/K(t)))=I_j$ for each $1\leq j\leq m$.

As $K$ is $\C(\Theta)$-Hilbertian, there exists a $K$-place $\varphi\colon F'\to \overline{F'}\cup\{\infty\}$ with $\overline{K(t)}=K$ and $[\overline{F'}:K]=[F':K(t)]$. We may assume that $\varphi$ is an $L$-place. Then the group homomorphism $\varphi^*\colon\Gal(K)\to\Gal(F'/K(t))$, introduced in Lemma~\ref{lem; ring cover1}, is surjective, $\res_{F'/L}\circ\varphi^*=\res_{K_s/L}$ and, by Remark \ref{rem; orderings}(c), $\varphi^*(I_P(K)) \subseteq I_P(F'/K(t))$ for every $P\in X(K)$. It follows that $\gamma=h\circ\varphi^*\colon\Gal(K)\to \Gal(F/E)$ is an epimorphism such that $\res_{F/L} \circ \gamma = \res_{K_s/L}$ and $\gamma(I_P(K))\subseteq I_P(F/E)$ for every $P\in X(K)$. 
\end{proof}

We can improve the conclusion of the preceding lemma:

\begin{lem}\label{epimorphism onto Sylow subgroup and orderings}
Let $\Theta$ be a Sylowian map and let $K$ be a field such that for every $\Ehat\in\Theta(K(t))$ the extension $\Ehat/K$ is totally real. Under the assumptions of the preceding lemma there exists an epimorphism $\gamma \colon \Gal(K) \to \Gal(F/E)$ such that $\res_{F/L} \circ \gamma = \res_{K_s/L}$ and for every $\delta \in I(K)$ the restriction of the unique ordering on $K_s(\delta)$ to $L(\delta)$ extends to an ordering on $F(\gamma(\delta))$. 
\end{lem}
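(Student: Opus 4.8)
The plan is to refine the epimorphism $\gamma$ produced by Lemma~\ref{epimorphism onto Sylow subgroup} so that it not only sends $I_P(K)$ into $I_P(F/E)$ but actually controls the behaviour of each individual real involution. The key observation is that an involution $\delta\in I(K)$ has a \emph{unique} ordering on its fixed field $K_s(\delta)$, restricting to a unique ordering $P_\delta$ on $L(\delta)$, which in turn restricts to an ordering $P\in X(K)$; conversely, by Remark~\ref{rem; orderings}(a) the requirement that this ordering extend to $F(\gamma(\delta))$ is precisely the statement $\gamma(\delta)\in I_{P}(F/E)$ \emph{for a suitable choice of extension of $P$ to $E$}. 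So the conclusion we want is strictly finer than ``$\gamma(\delta)\in I_P(F/E)$'', and the extra hypothesis that every $\Ehat\in\Theta(K(t))$ — in particular $E$ — is totally real over $K$ is exactly what lets us match orderings compatibly, via the continuous section $\rho\colon X(K)\to X(E)$ already used in the previous proof.

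First I would invoke Corollary~\ref{kill centralizers of real involutions} to pass to a finite Galois extension $L'/K$ containing $L$ (and $\sqrt{-1}$, so that $L'$ is not formally real) such that the restriction to $L$ of the centralizer in $\Gal(L'/K)$ of any real involution $\varepsilon\in I(L'/K)$ is just $\{1,\res_{L'/L}\varepsilon\}$; the point of this is that once we know $\res_{F/L}(\gamma(\delta))$ and the ordering it carries, the involution $\gamma(\delta)$ is pinned down tightly enough that its real-ness data over $E$ is forced. Then I would re-run the argument of Lemma~\ref{epimorphism onto Sylow subgroup}, but instead of merely choosing, for each clopen piece $X_j$ of a partition of $X(K)$, an arbitrary conjugacy class $I_j\subseteq I(F/E)$ lying over $I_{X_j}(L/K)$, I would use the section $\rho$ to choose $I_j$ to be $I_{\rho(P)}(F/E)$ for $P\in X_j$ (the partition being taken fine enough, by Remark~\ref{rem; orderings}(b), that this is well-defined on each $X_j$). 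One then has to check that $\res_{F/L}(I_j)=I_{X_j}(L/K)$ still holds — which it does, by the same $\C(\Theta)$/conjugacy-class argument as before, since $\rho(P)\supseteq P$ — and feed this data into Lemma~\ref{lem; 5.2} to obtain $F'/K(t)$ and the isomorphism $h\colon\Gal(F'/K(t))\to\Gal(F/E)$ carrying $I_{X_j}(F'/K(t))$ onto $I_j$. Composing with $\varphi^*$ from a $\C(\Theta)$-Hilbertianity specialization of $F'$ (chosen, as before, to be an $L$-place and, via Remark~\ref{rem; orderings}(c), to satisfy $\varphi^*(I_P(K))\subseteq I_P(F'/K(t))$) yields $\gamma=h\circ\varphi^*$.

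It then remains to verify the refined conclusion. Fix $\delta\in I(K)$ and let $P\in X(K)$ be the ordering underneath the unique ordering of $K_s(\delta)$; say $P\in X_j$. Since $\varphi^*(\delta)\in I_P(F'/K(t))$, its image $\gamma(\delta)=h(\varphi^*(\delta))$ lies in $h(I_P(F'/K(t)))$; because $I_P(F'/K(t))\subseteq I_{X_j}(F'/K(t))$ and $h$ maps the latter onto $I_j=I_{\rho(P)}(F/E)$, we get $\gamma(\delta)\in I_{\rho(P)}(F/E)$, i.e.\ the ordering $\rho(P)$ on $E$ extends to $F(\gamma(\delta))$. One finally has to argue that the resulting ordering on $F(\gamma(\delta))$, restricted to $L(\gamma(\delta))$, is the \emph{same} ordering as the one coming from $K_s(\delta)$ — here is where Corollary~\ref{kill centralizers of real involutions} does its work: $\res_{F/L}\circ\gamma=\res_{K_s/L}$ forces $\res_{F/L}(\gamma(\delta))=\res_{K_s/L}(\delta)$, and the self-centralizing/centralizer-control property guarantees that an ordering on $L(\res_{K_s/L}(\delta))$ extending $P$ is unique, so both orderings agree. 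This matching step — reconciling the ordering manufactured on the $E$-side via $\rho$ with the intrinsic ordering carried by $\delta$ on the $L$-side — is the delicate part of the proof; everything else is a bookkeeping variant of Lemma~\ref{epimorphism onto Sylow subgroup}. I expect the main obstacle to be precisely the careful tracking of which ordering of which intermediate field is being extended at each stage, and ensuring the total-reality hypothesis on $E$ (and the section $\rho$) is used consistently so that the orderings on $K$, on $E$ via $\rho$, and on $L(\delta)$ are genuinely compatible.
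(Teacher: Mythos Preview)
Your proposal has a genuine gap in the final matching step. You correctly invoke Corollary~\ref{kill centralizers of real involutions} to produce $L'$, but you never integrate $L'$ into the construction of $\gamma$: your re-run of Lemma~\ref{epimorphism onto Sylow subgroup} is carried out entirely at the level of $F/E/L$, with $I_j\subseteq I(F/E)$ and $h\colon\Gal(F'/K(t))\to\Gal(F/E)$. Consequently the centralizer-control statement you have available is about the restriction to $L$ of centralizers in $\Gal(L'/K)$, not about centralizers in $\Gal(L/K)$ --- and it is the latter that would be needed to conclude that an ordering on $L(\varepsilon)$ extending $P$ is unique. In general $C_{\Gal(L/K)}(\varepsilon)$ is strictly larger than $\{1,\varepsilon\}$ (take $K=\Q$, $L=\Q(\sqrt{2},\sqrt{-1})$, $\varepsilon$ complex conjugation: the centralizer is the whole Klein four group and there are two orderings on $\Q(\sqrt{2})$ over the unique ordering of $\Q$), so your uniqueness claim fails. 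The refined choice $I_j=I_{\rho(P)}(F/E)$ does not help here, because $E$ and $L$ are linearly disjoint over $K$: knowing that $Q|_E=\rho(P)$ imposes no constraint on $Q|_{L(\varepsilon)}$ beyond $Q|_K=P$.

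The paper's proof fixes exactly this: it sets $F'=FL'$, chooses $E'\in\Theta(F'/K(t))$ mapping onto $\Gal(F/E)$ (using the totally-real hypothesis on $\Ehat\in\Theta(K(t))$ to ensure $E'/K$ is totally real), and applies Lemma~\ref{epimorphism onto Sylow subgroup} as a black box to $F'/E'/L'$, obtaining $\gamma'$; then $\gamma=\res_{F'/F}\circ\gamma'$. The point is that now the ordering $Q'$ on $F'(\gamma'(\delta))$ restricts to an ordering $P'_2$ on $L'(\varepsilon')$ for the \emph{same} involution $\varepsilon'=\res_{K_s/L'}(\delta)$ that carries $P'_1$ coming from $K_s(\delta)$. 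Only then can one invoke \cite[Proposition~2.1(iii)]{HJ-prc} at the $L'$-level to find $\sigma'\in C_{\Gal(L'/K)}(\varepsilon')$ with $\sigma'(P'_1)=P'_2$, restrict to $\sigma\in\{1,\varepsilon\}$ via the centralizer control, and conclude $P_1=P_2$. Your outline contains all the ingredients but applies them at the wrong level; replacing $F,E,L$ by $FL',E',L'$ in your middle paragraph and then defining $\gamma$ by restriction would repair it.
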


\begin{proof}
By Corollary~\ref{kill centralizers of real involutions}, there exists a finite Galois extension $L'$ of $K$ containing $L$ such that the restriction to $L$ of the centralizer of every real involution $\varepsilon'\in I(L'/K)$ is $\{1,\res_{L'/L}\varepsilon'\}\leq\Gal(L/K)$. Let $F'=FL'$. Then $L'$ is the algebraic closure of $K$ in $F'$.

By Definition~\ref{def; Sylowian map}(a) there is
$\Ehat\in\Theta(K(t))$ such that the restriction of automorphisms maps $\Gal(\Ehat)$ onto $\Gal(F/E)$. Put $E' = F'\cap \Ehat$. Then the restriction $\res_{F'/F}$ maps $\Gal(F'/E')$ onto $\Gal(F/E)$ and, again by Definition~\ref{def; Sylowian map}(a), $E'\in\Theta(F'/K(t))$. By assumption, the extension $\Ehat/K$ is totally real and therefore so is $E'/K$. We apply the preceding lemma to $F'/L'$ instead of $F/L$: There exists an epimorphism $\gamma'\colon\Gal(K)\to\Gal(F'/E')$ such that $\res_{F'/L'} \circ \gamma' = \res_{K_s/L'}$ and $\gamma'(I_P(K))\subseteq I_P(F'/E')$ for every $P\in X(K)$. Let $\gamma=\res_{F'/F}\circ\gamma'$.  Then $\res_{F/L} \circ \gamma =\res_{K_s/L}$.

Let $\delta \in I(K)$ and let $\varepsilon$ (resp. $\varepsilon'$) be its restriction to $L$ (resp. $L'$). Let $P$ (resp. $P_1,P'_1$) be the restriction to $K$ (resp. $L(\varepsilon), L'(\varepsilon')$) of the unique ordering on $K_s(\delta)$. Then, $\delta\in I_P(K)$ and therefore $\gamma'(\delta)\in I_P(F'/E')$. Hence, $P$ extends to an ordering $Q'$ on $F'(\gamma'(\delta))$. Let $Q$ be the restriction of $Q'$ to $F(\gamma(\delta))$ and let $P_2$ (resp. $P'_2$) be the restriction of $Q$ (resp. $Q'$) to $L(\varepsilon)$ (resp. $L'(\varepsilon')$).

Both $P'_1$ and $P'_2$ extend $P$. By \cite[Proposition 2.1(iii)]{HJ-prc}, there exists $\sigma'\in\Gal(L'/K)$ such that $\sigma' (L'(\varepsilon')) = L'(\varepsilon')$ and $\sigma'(P'_1)=P'_2$. Thus, $(\varepsilon')^{\sigma'}=\varepsilon'$. Let $\sigma = \res_{L'/L}\sigma'$. Then $\sigma(P_1)=P_2$ and $\varepsilon^\sigma = \varepsilon$.

By the first paragraph of this proof, $\sigma \in \{1, \varepsilon\}$. Hence $P_1=\sigma(P_1)=P_2$. Thus, $P_1$ extends to $Q$.
\end{proof}

\begin{lem}\label{restriction of square}
Let $F_1$ and $F_2$ be two Galois extensions of a field $K$. Let $F_0=F_1\cap F_2$ and $F =F_1F_2$. Let $H\leq\Gal(F/K)$ and assume that the restriction to $H$ of the epimorphism $\res_{F/F_0}$ is injective. Let $F'$ be the fixed field of $H$ in $F$ and let $F_1', F_2', F_0'$ be the fixed fields of the images of $H$ in $F_1, F_2, F_0$, respectively. Then $F_1'$ and $F_2'$ are linearly disjoint over $F_0'$ and $F_1'F_2' = F'$.
\end{lem}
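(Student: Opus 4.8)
The statement becomes purely group-theoretic under the Galois correspondence, and that is how I would prove it. Put $G=\Gal(F/K)$ (note $F=F_1F_2/K$ is Galois) and, for $i=1,2$, set $N_i=\Gal(F/F_i)\triangleleft G$ (normal because $F_i/K$ is Galois). From $F_1\cap F_2=F_0$ and $F_1F_2=F$ one gets $N_1N_2=\Gal(F/F_0)$ and $N_1\cap N_2=\Gal(F/F)=1$, so $N:=N_1N_2\triangleleft G$ is the internal direct product $N_1\times N_2$. Since $N=\ker(\res_{F/F_0})$, the hypothesis that $\res_{F/F_0}$ is injective on $H$ reads exactly $H\cap N=1$. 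Finally, as $N_i=\ker(\res_{F/F_i})$, the preimage in $G$ of the image of $H$ in $\Gal(F_i/K)$ is $HN_i$, so by the Galois correspondence $F_i'=F^{HN_i}$, and likewise $F_0'=F^{HN}$ and $F'=F^{H}$ (all these subgroups are closed, being compact as products of compact sets, so they really do correspond to subfields). In particular $F_0'\subseteq F_1',F_2'$, since $HN\supseteq HN_i$.

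For $F_1'F_2'=F'$ I would compute $F_1'F_2'=F^{HN_1\cap HN_2}$ and show $HN_1\cap HN_2=H$: if $h_1n_1=h_2n_2$ with $h_k\in H$, $n_k\in N_k$, then $h_2^{-1}h_1=n_2n_1^{-1}\in H\cap N=1$, hence $h_1=h_2$, and then $n_1=n_2\in N_1\cap N_2=1$; thus the common element lies in $H$, and $F_1'F_2'=F^H=F'$.

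For the linear disjointness the point to be careful about is that $F_1'/F_0'$ and $F_2'/F_0'$ need not be Galois, so I would not argue with them directly but route through the genuine Galois extension $F_2/K$. By the standard fact that a Galois extension is linearly disjoint from any other extension over their intersection (cf.\ \cite[\S2.5]{FA}), $F_2$ and $F_1'$ are linearly disjoint over $F_2\cap F_1'$. Here $F_2\cap F_1'=F^{\langle N_2,\,HN_1\rangle}$, and since $N_2\triangleleft G$ we have $\langle N_2,HN_1\rangle=HN_1N_2=HN$, so $F_2\cap F_1'=F^{HN}=F_0'$. Hence $F_1'$ and $F_2$ are linearly disjoint over $F_0'$, and a fortiori so are $F_1'$ and $F_2'$, because $F_0'\subseteq F_2'\subseteq F_2$. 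This finishes the proof (and incidentally shows $F_1'\cap F_2'=F_0'$).

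I do not anticipate a real obstacle: once the translation to the four subgroups $H\subseteq HN_1,HN_2\subseteq HN$ of $G$ is in place, everything rests on the two elementary facts $H\cap N=1$ and $N_1\cap N_2=1$. If one prefers to avoid invoking the linear-disjointness fact, the same conclusion follows from the degree count $[F_1'F_2':F_2']=[HN_2:H]=[N_2:N_2\cap H]=[N_2:1]=[N_2:N_2\cap HN_1]=[HN:HN_1]=[F_1':F_0']$ (immediate when $F/K$ is finite, and in general after passing to finite sub-extensions), which gives linear disjointness by the usual criterion for finite separable extensions.
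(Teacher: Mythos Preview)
Your proof is correct. Both arguments reach the linear disjointness of $F_1'$ and $F_2'$ via the intermediate claim that $F_1'$ and $F_2$ are linearly disjoint over $F_0'$, but the routes differ in style. The paper stays on the field side throughout: it first shows $F_1'\cap F_0=F_0'$ and $F_1'F_0=F_1$ from the isomorphism $\Gal(F_1/F_1')\to\Gal(F_0/F_0')$, then combines the linear disjointness of $F_1',F_0$ over $F_0'$ with that of $F_1,F_2$ over $F_0$ via the tower property \cite[Lemma~2.5.3]{FA}; for $F_1'F_2'=F'$ it uses a further linear-disjointness/tower step and a degree comparison. You instead translate once to the subgroup lattice $H\subseteq HN_1,HN_2\subseteq HN$ and compute directly: $HN_1\cap HN_2=H$ (using $H\cap N=1$ and $N_1\cap N_2=1$) gives $F_1'F_2'=F'$ in one line, and $\langle N_2,HN_1\rangle=HN$ (using $N_2\normal G$) gives $F_1'\cap F_2=F_0'$. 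Your approach is a bit more elementary and makes the role of the direct product $N=N_1\times N_2$ explicit; the paper's approach has the advantage of staying in the same field-theoretic idiom used elsewhere in the article. One small remark: your justification that $HN_i$ is closed via ``product of compact sets'' is fine, but since $N_i\normal G$ one can also just note that $HN_i$ is the preimage of the compact set $\res_{F/F_i}(H)$ under the continuous map $\res_{F/F_i}$.
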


\begin{proof}
By assumption, the restriction map $\Gal(F/F') \to \Gal(F_0/F_0')$ is an isomorphism. Hence, so are the restrictions $\Gal(F_1/F_1')\to \Gal(F_0/F_0')$ and $\Gal(F/F')\to\Gal(F_2/F_2')$. By Galois theory, $F_1'\cap F_0=F_0'$ and $F_1'F_0 = F_1$. Since $F_0/F_0'$ is a Galois extension, $F_1'$ and $F_0$ are linearly disjoint over $F_0'$ (\cite[first paragraph on p. 35]{FA}). Similarly, $F_1$ and $F_2$ are linearly disjoint over $F_0$. By the tower property of linear disjointness (\cite[Lemma 2.5.3]{FA}), $F_1'$ and $F_2$ are linearly disjoint over $F_0'$. In particular, $F_1'$ and $F_2'$ are linearly disjoint over $F_0'$.

Now, $F_1'F_2 = F_1'(F_0F_2) = (F_1'F_0)F_2 = F_1F_2 = F$. Again, by \cite[first paragraph on p. 35]{FA}, $F'$ and $F_2$ are linearly disjoint over $F_2'$. Thus, by the tower property, $F'$ and $(F_1'F_2')F_2$ are linearly disjoint over $F_1'F_2'$. By \cite[Corollary 2.5.2]{FA}, $[F':F_1'F_2']=[F:(F_1'F_2')F_2]$. But $(F_1'F_2')F_2=F_1'F_2=F$, so $F_1'F_2'=F'$.
\end{proof}

\begin{theorem}\label{PRC p Hilbertianity}
Let $\Theta$ be a Sylowian map and let $K$ be a PRC $\C(\Theta)$-Hilbertian field with $\Gal(K)$ pro-$\C(\Theta)$. Suppose that for every $\Ehat\in\Theta(K(t))$ the extension $\Ehat/K$ is totally real. Then $K$ is strongly $\C(\Theta)$-Hilbertian.
\end{theorem}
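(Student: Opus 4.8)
The plan is to mimic the proof of Theorem~\ref{PAC p Hilbertianity}, with the PAC input (existence of infinitely many $K$-rational places of a regular extension) replaced by the PRC input, and with the constant embedding problem solved so as to respect orderings. So let $F/K(t)$ be an arbitrary finite Galois extension and let $F_\Theta\in\Theta(F/K(t))$; we must show $H_{K,\Theta}(F)$ is infinite. Since adjoining $\sqrt{-1}$ to $F$ changes nothing essential (it only enlarges $L$ and $F_\Theta$ in a controlled way, and a $K$-place of the larger extension restricts to one of $F$ with the required degree equality), I would first reduce to the case where the algebraic closure $L$ of $K$ in $F$ is not formally real. As in the PAC proof, because $\Gal(K)$ is pro-$\C(\Theta)$ its image $A=\Gal(L/K)$ lies in $\C(\Theta)$, so $\res_{F/L}$ restricted to $B=\Gal(F/F_\Theta)$ is onto $A$; hence $F_\Theta$ is regular over $K$. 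By hypothesis $F_\Theta\in\Theta(K(t))$ would give that $F_\Theta/K$ is totally real — but more carefully one takes $\Ehat\in\Theta(K(t))$ with $\res$ carrying $\Gal(\Ehat)$ onto $\Gal(F/F_\Theta)$ and sets $E=F\cap\Ehat\in\Theta(F/K(t))$, which is totally real over $K$ and still satisfies $F\cap D$-type relations; this puts us exactly in the situation of Lemma~\ref{epimorphism onto Sylow subgroup and orderings}.

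Next I would invoke Lemma~\ref{epimorphism onto Sylow subgroup and orderings} (with $E$ in the role of $E$) to obtain an epimorphism $\gamma\colon\Gal(K)\to\Gal(F/E)$ with $\res_{F/L}\circ\gamma=\res_{K_s/L}$ and with the ordering-lifting property: for every $\delta\in I(K)$, the restriction to $L(\delta)$ of the unique ordering on $K_s(\delta)$ extends to an ordering on $F(\gamma(\delta))$. Let $M$ be the fixed field of $\ker\gamma$ in $K_s$; it is a finite Galois extension of $K$ containing $L$. Apply Proposition~\ref{field-crossing(a)}(a) to $\gamma$ to produce a regular extension $D/K$ with $E\subseteq D\subseteq FM$, $DM=FM$, and $\gamma(\Gal(K))=\Gal(F/F\cap D)=\Gal(F/E)$, so $F\cap D=E$. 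The crucial new point over the PAC case is that $D/K$ must be \emph{totally real}, so that PRC-ness yields infinitely many $K$-rational places of $D$: every ordering $P$ of $K$ corresponds to a real involution $\delta\in I(K)$ (up to conjugacy), and the ordering-lifting property of $\gamma$ together with $DM=FM$ and the cartesian-square structure should let one extend $P$ to $D$ — here I would use Lemma~\ref{restriction of square} with $F_1,F_2$ the relevant Galois closures and $H=\gamma'(\Gal(K))$ (or $\Delta$ from the field-crossing diagram) to see that $D=D_1D_2$ decomposes compatibly with the fixed field $L(\delta)$ of a real involution, and that the ordering on $F(\gamma(\delta))$ provided by the lemma restricts to an ordering of $D$ extending $P$.

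Once $D/K$ is known to be finitely generated, regular, and totally real, PRC-ness of $K$ gives infinitely many $K$-places $\varphi'\colon D\to K\cup\{\infty\}$; discarding the finitely many ramified over $K(t)$, extend each to an $M$-place $\varphi$ of $FM$ and restrict to $F$. By Proposition~\ref{field-crossing(a)}(b), $\varphi^*\colon\Gal(K)\to\Gal(F/E)$ is onto $\Gal(F/F\cap D)=\Gal(F/E)$, hence by Lemma~\ref{lem; ring cover1}(b) applied to $\varphi$ we get $[F:E]=[\overline F:\overline E]=[F:F_\Theta]$ with $\overline{F_\Theta}$ in place of $\overline E$ after the initial identification — i.e. $\varphi(t)=\varphi'|_{K(t)}\in H_{K,\Theta}(F)$. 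Since there are infinitely many such $\varphi'$, giving infinitely many distinct values $a=\varphi(t)$ (two places with the same value of $t$ being absorbed into the finitely-many-exceptions bookkeeping), $H_{K,\Theta}(F)$ is infinite.

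The main obstacle, and the part requiring genuine care rather than bookkeeping, is establishing that the field $D$ produced by the field-crossing argument is \emph{totally real over $K$}. The ordering-lifting property delivered by Lemma~\ref{epimorphism onto Sylow subgroup and orderings} is phrased in terms of involutions and the fixed field $F(\gamma(\delta))$, whereas PRC-ness demands that \emph{every} ordering of $K$ extend to $D$; translating between "orderings on $K$" and "real involutions in $I(K)$" and then transporting an ordering from $F(\gamma(\delta))$ down through the cartesian square $\Gal(FM/E)\to\Gal(F/E)\times_{\Gal(L/K)}\Gal(M/K)$ to $D\subseteq FM$ is where Lemma~\ref{restriction of square} and Remark~\ref{rem; orderings} must be combined precisely; one also has to check that the passage $F\rightsquigarrow F\sqrt{-1}$ in the initial reduction does not destroy total reality of the relevant $\Theta$-subfields, which is why the hypothesis "$\Ehat/K$ totally real for all $\Ehat\in\Theta(K(t))$" is imposed rather than a hypothesis about $F$ alone.
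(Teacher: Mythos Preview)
Your outline is the paper's proof: reduce to $\sqrt{-1}\in L$, use the totally-real hypothesis on $\Ehat\in\Theta(K(t))$ to arrange $E\in\Theta(F/K(t))$ totally real over $K$, invoke Lemma~\ref{epimorphism onto Sylow subgroup and orderings} to obtain the epimorphism $\gamma$ with the ordering-lifting property, run Proposition~\ref{field-crossing(a)}(a) to produce the regular extension $D/K$ with $DM=FM$ and $F\cap D=E$, show $D/K$ totally real, and finish by PRC plus Proposition~\ref{field-crossing(a)}(b).

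The only real gap is in your sketch of the total-reality step, and it is exactly the point you flag as ``requiring genuine care''. You propose applying Lemma~\ref{restriction of square} with $H=\Delta=\gamma'(\Gal(K))$ to decompose $D$; that is not how the argument goes. For a fixed ordering $P\in X(K)$ choose $\delta\in I(K)$ with $(K_s(\delta),\hat P)$ a real closure of $(K,P)$, and apply Lemma~\ref{restriction of square} with $H=\langle\gamma'(\delta)\rangle$ (injectivity of $\res_{FM/EL}$ on $H$ holds because $\sqrt{-1}\in L$ forces $\delta|_L\neq 1$). The conclusion is that $F(\gamma(\delta))$ and $M(\delta)$ are linearly disjoint over $L(\delta)$ with compositum $FM(\gamma'(\delta))$. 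Crucially, the ordering $Q$ on $F(\gamma(\delta))$ supplied by Lemma~\ref{epimorphism onto Sylow subgroup and orderings} does \emph{not} restrict to $D$ --- in general $D\not\subseteq F(\gamma(\delta))$. What one needs is an extra input you do not mention: since $Q$ and the ordering $P'=\hat P|_{M(\delta)}$ agree on $L(\delta)$, a result of Jarden (\cite[p.~241]{Jarden-e-fold}) on linearly disjoint ordered extensions produces a common extension of $Q$ and $P'$ to $FM(\gamma'(\delta))$. Since $\gamma'(\delta)\in\Delta$, we have $D\subseteq FM(\gamma'(\delta))$, and now one restricts to $D$. Without this combining step the total-reality claim is unproved.
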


\begin{proof}
If $K$ admits no orderings, then $K$ is PAC and the assertion follows from Theorem~\ref{PAC p Hilbertianity}. Suppose otherwise. Let $F/K(t)$ be a finite Galois extension and let $E\in\Theta(F/K(t))$. As in the proof of Lemma \ref{epimorphism onto Sylow subgroup and orderings}, the extension $E/K$ is totally real. Let $L$ be the algebraic closure of $K$ in $F$. By \cite[Lemma 2.2]{FH2}, we may replace $F$ by a field containing $F$ and thus assume that $\sqrt{-1}\in L$.

By Lemma~\ref{epimorphism onto Sylow subgroup and orderings}, there exists an epimorphism $\gamma\colon\Gal(K)\to \Gal(F/E)$ such that \linebreak $\res_{F/L}\circ\gamma=\res_{K_s/L}$ and for every $\delta\in I(K)$ the restriction of the unique ordering on $K_s(\delta)$ to $L(\delta)$ extends to an ordering on $F(\gamma(\delta))$.

Let $M$ be the fixed field of $\ker(\gamma)$ in $K_s$.
This is a finite Galois extension of $K$.
As $\res_{F/L}\circ\gamma=\res_{K_s/L}$, we have
$\ker(\gamma) \le \ker(\res_{K_s/L})$, and hence $L \subseteq M$, and $\res_{F/L}$ is surjective, and hence $E\cap L = K$.
Thus $K$ is algebraically closed in $E$.
Since $K(t)/K$ and $F/E$ are separable and $K \subseteq E \subseteq F$, also $E/K$ is separable.
Therefore $E/K$ is regular.

By Proposition~\ref{field-crossing(a)}(a) there is a commutative diagram~\eqref{groups crosing diagram} such that the following holds: the fixed field $D$ of $\gamma'(\Gal(K))$ in $FM$ is a regular extension of $K, DM=FM$ and $F\cap D= E$.

We claim that $D/K$ is totally real. Indeed, let $P$ be an ordering on $K$ and let $\delta \in I(K)$ such that $(K_s(\delta), \hat{P})$ is a real closure of $(K,P)$. Let $P'$ be the restriction of $\hat{P}$ to $M(\delta)$. Then the restriction of $P'$ to $L(\delta)$ extends to an ordering $Q$ on $F(\gamma(\delta))$. Clearly, $F(\gamma(\delta))\cap M(\delta) = L(\delta)$. By Lemma \ref{restriction of square}, $F(\gamma(\delta))$ and $M(\delta)$ are linearly disjoint over $L(\delta)$ and $F(\gamma(\delta))M(\delta) = FM(\gamma'(\delta))$. By \cite[p. 241]{Jarden-e-fold}, there exists an ordering on $FM(\gamma'(\delta))$ that extends both $P'$ and $Q$. In particular, $P$ extends to $D\subseteq FM(\gamma(\delta))$.

As $K$ is PRC and $D/K$ is totally real, there exists a $K$-place $\varphi\colon D\to K\cup\{\infty\}$, unramified in $DM$. Extend $\varphi$ to an $M$-place of $DM=FM$ and let $\varphi$ be its restriction to $F$. By Proposition~\ref{field-crossing(a)}(b) and since $F\cap D = E$,\  $\varphi^*\colon\Gal(K)\to\Gal(F/E)$ is an epimorphism. Thus, by Lemma~\ref{lem; ring cover1} with respect to $\varphi, [F:E]=[\overline{F}:\overline{E}]$.
\end{proof}

\begin{cor}
Every $p$-Hilbertian PRC field $K$ with $\Gal(K)$ pro-$p$
is strongly $p$-Hilbertian.
\end{cor}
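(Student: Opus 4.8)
The plan is to deduce the corollary directly from Theorem~\ref{PRC p Hilbertianity} by taking $\Theta$ to be the Sylowian map $\Theta(G) = \{\text{$p$-Sylow subgroups of }G\}$ from Example~\ref{examples of Sylowian maps}(b). First I would recall that for this $\Theta$ we have $\C(\Theta) = \{\text{finite $p$-groups}\}$, so the hypothesis ``$\Gal(K)$ pro-$\C(\Theta)$'' is exactly ``$\Gal(K)$ pro-$p$'', and ``$\C(\Theta)$-Hilbertian'' ($=\Theta$-Hilbertian, since $\Theta(F/K(t)) = \{K(t)\}$ for Galois $F/K(t)$ with $p$-group Galois group) is exactly ``$p$-Hilbertian'' in the sense of Definition~\ref{def; Hilbertian}; likewise ``strongly $\C(\Theta)$-Hilbertian'' is ``strongly $p$-Hilbertian''. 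So all the hypotheses and the conclusion of the corollary match those of the theorem, \emph{except} that the theorem carries the extra assumption that every $\Ehat \in \Theta(K(t))$ gives a totally real extension $\Ehat/K$.

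Hence the one thing to check is that this extra hypothesis is automatic when $\Gal(K)$ is pro-$p$. Here $\Ehat$ is the fixed field in $K_s$ of a $p$-Sylow subgroup of $\Gal(K(t))$. I would argue as follows. If $\mathrm{char}(K) \neq 0$ then by Artin--Schreier theory $K$ has no orderings, so $X(K) = \emptyset$ and ``totally real'' holds vacuously. If $\mathrm{char}(K) = 0$: since $\Gal(K)$ is pro-$p$, every ordering of $K$ comes from an involution in $\Gal(K)$, which forces $p = 2$ (Artin--Schreier: a field with a nontrivial involution in its absolute Galois group is formally real, but also $2 \mid |\Gal(K)|$ only if $p=2$ as $\Gal(K)$ is pro-$p$); I should therefore split on whether $K$ is formally real. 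If $K$ is not formally real, again $X(K) = \emptyset$ and there is nothing to prove. If $K$ is formally real, then $p = 2$, and I need to show: for any ordering $P$ of $K$ it extends to $\Ehat$, where $\Gal(\Ehat) = \Gal(K_s/\Ehat)$ is a $2$-Sylow subgroup of $\Gal(K(t))$. The key point is that a real closure $K_P$ of $(K,P)$ has absolute Galois group of order $2$, so $\Gal(K_P/K \cdot K_s) \cap \Gal(K(t))$ — more precisely, the decomposition argument — lets me find an involution over $K(t)$ whose fixed field is formally real and lies inside $\Ehat$; since $\Gal(K(t)/K)$ maps onto $\Gal(K)$ and a $2$-Sylow subgroup of $\Gal(K(t))$ meets every conjugacy class of involutions (by Sylow theory for profinite groups, \cite[Proposition 22.9.2]{FA}, every involution is conjugate into a given $2$-Sylow subgroup), the involution coming from $P$ lies in $\Gal(\Ehat)$ up to conjugacy, which says precisely that $P$ extends to $\Ehat$.

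The main obstacle is this last verification --- making rigorous that ``$\Gal(K)$ pro-$p$ and $K$ formally real implies every $\Ehat\in\Theta(K(t))$ is totally real over $K$''. The cleanest route is: reduce to $p=2$ as above; then note that for each ordering $P$ of $K$, a real involution $\varepsilon_P \in \Gal(K)$ with formally real fixed field exists, its preimage gives a real involution in $\Gal(K(t))$ (take any involution in $\Gal(K(t))$ restricting to $\varepsilon_P$, using that $\Gal(K(t)) \to \Gal(K)$ is split or at least that involutions lift along this particular surjection because $K(t)/K$ is regular so $\Gal(K(t)_s/K_s(t)) $ is normal with the quotient $\Gal(K)$, and involutions of the quotient lift), and by profinite Sylow theory this involution is conjugate into the chosen $2$-Sylow subgroup $\Gal(\Ehat)$; conjugation preserves formal reality of the fixed field, so $\Ehat$ contains a formally real subfield over which $P$ extends, hence $P$ extends to $\Ehat$. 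Once this is in place, the corollary follows by invoking Theorem~\ref{PRC p Hilbertianity} verbatim. I would present the proof as: ``Take $\Theta$ as in Example~\ref{examples of Sylowian maps}(b); then $\C(\Theta)$ is the class of finite $p$-groups, so the hypotheses become those of the corollary. It remains to check the total-reality hypothesis of Theorem~\ref{PRC p Hilbertianity}, which we do as follows \ldots'', then conclude.
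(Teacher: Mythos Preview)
Your approach is correct and reaches the conclusion via Theorem~\ref{PRC p Hilbertianity}, but your verification of the total-reality hypothesis is more laborious than the paper's. Once $p=2$, the paper simply observes that the fixed field $\Ehat$ of a $2$-Sylow subgroup of $\Gal(K(t))$ has \emph{odd} degree over $K(t)$; by Springer's theorem (\cite[(1.26)]{Prestel}) every ordering of $K(t)$ extends through an odd-degree extension, and since every ordering of $K$ extends to $K(t)$, the extension $\Ehat/K$ is totally real in one line. Your route---lifting a real involution from $\Gal(K)$ to $\Gal(K(t))$ and conjugating it into the $2$-Sylow subgroup---also works, but the phrasing ``its preimage gives a real involution'' and the tentative appeal to a splitting of $\Gal(K(t)) \to \Gal(K)$ should be tightened: the clean way is to extend the given ordering $P$ to $K(t)$, take a real closure $R$ of $(K(t),P')$, and let $\delta$ be the involution with fixed field $R$; then $\delta$ is real, its unique ordering restricts to $P$ on $K$, and after conjugating into $\Gal(\Ehat)$ the restricted ordering on $K$ is unchanged because conjugation is by elements fixing $K$. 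The paper also dispatches the case $X(K)=\emptyset$ by citing the PAC theorem (Theorem~\ref{PAC p Hilbertianity}) directly, while you let Theorem~\ref{PRC p Hilbertianity} absorb it via vacuity; either is fine. The paper's argument buys brevity and avoids the lifting/conjugacy bookkeeping; yours is more self-contained in that it does not invoke Springer's theorem.
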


\begin{proof}
If $K$ admits no orderings, then $K$ is PAC and 
the claim follows by Theorem~\ref{PAC p Hilbertianity}. 
Otherwise, $2 | \# \Gal(K)$, hence $p=2$.
Thus, if $E$ is the fixed field in $K(t)_s$ of some $p$-Sylow subgroup of $\Gal(K(t))$, then $[E:K(t)]$ is odd. By \cite[(1.26)]{Prestel}, the extension $E/K(t)$ is totally real.
In particular, $E/K$ is totally real.
Thus, the claim follows by Theorem~\ref{PRC p Hilbertianity}.
\end{proof}




\begin{thebibliography}{10}
\bibitem[Ax]{Ax}
  J. Ax, 
  \emph{The elementary theory of finite fields}, Annals of Mathematics \textbf{88} (1968), 239--271.
\bibitem[BSP]{BSP}
L. Bary-Soroker and E. Paran,
\emph{Fully Hilbertian fields}, Israel Journal of Mathematics \textbf{194} (2013), 507-538 (2013).


\bibitem[DH]{DH}
P. D\`ebes and D. Haran,
\emph{Almost hilbertian fields}, Acta Arithmetica \textbf{88} (1999), 269--287.


\bibitem[ELW]{ELW}
  R. Elman, T. Y. Lam and A. R. Wadsworth,
  \emph{Orderings under Field Extensions}, Journal f\"ur die reine und angewandte Mathematik \textbf{306} (1979), 7--27.

\bibitem[FLP]{FLP}
 A. Fehm, F. Legrand and E. Paran,
 \emph{Embedding problems for automorphism groups of field extensions}, Bulletin of the LMS \textbf{51} (2019), 732--744.

\bibitem[FSS]{FSS}
  B. Fein, D. Saltman and M. Schacher,
   \emph{Brauer-Hilbertian fields}, Transactions of the AMS \textbf{334} (1992), 915--928.
   
\bibitem[F]{F}
  S. Fried, 
  \emph{$\mathcal{E}$-Hilbertianity and quasi-formations}, Ph.D. Thesis, Tel Aviv University, 2017.

\bibitem[FH1]{FH1}
  S. Fried and D. Haran,
  \emph{Quasi-formations}, Israel Journal of Mathematics \textbf{229} (2019), 193--217.
\bibitem[FH2]{FH2}
  S. Fried and D. Haran,
  \emph{$\Theta$-Hilbertianity}, Journal of Algebra \textbf{555} (2020), 36--51.
\bibitem[FHV]{FHV}
  M. D. Fried, D. Haran and H. V\"olklein,
  \emph{Real Hilbertianity and the field of totally real numbers}, Contemporary Mathematics \textbf{174} (1994), 1--34.

\bibitem[FJ]{FA}
  M. D. Fried and M. Jarden,
  \emph{Field Arithmetic}, Ergebnisse der Mathematik (3) \textbf{11}, 3rd edition, Springer, 2008.
  
 \bibitem[FJ1]{FJSigma}
  M. D. Fried and M. Jarden,
  \emph{On $\Sigma$-Hilbertian fields}, 
  Pacific Journal of Mathematics \textbf{185} (1998), 307--313.

\bibitem[FV]{FV}
  M. D. Fried and H. V\"olklein,
  \emph{The embedding problem over a Hilbertian PAC-field}, Annals of Mathematics \textbf{135} (1992), 469--481.

\bibitem[GPR]{GPR}
  B. W. Green, F. Pop and P. Roquette,
  \emph{On Rumely's local global principle}, Jahresbericht DMV \textbf{97} (1995), 43--74.

\bibitem[Hil]{Hilbert}
  D. Hilbert,
  \emph{Ueber die Irreducibilit\"at ganzer rationaler Functionen mit ganzzahligen Coefficienten}, Journal f\"ur die reine und angewandte Mathematik \textbf{110} (1892), 104--129.

\bibitem[HJ]{HJ-prc}
  D. Haran and M. Jarden,
  \emph{The absolute Galois group of a pseudo real closed field}, Annali della Scuola Normale Superiore - Pisa, Serie IV, \textbf{12} (1985), 449--489.
\bibitem[Jar1]{Jarden-e-fold}
 M. Jarden,
  \emph{The elementary theory of large $e$-fold ordered fields}, Acta mathematica, \textbf{149} (1982), 239--260.
\bibitem[Jar2]{p-Hilbert}
  M. Jarden,
  \emph{$p$-Hilbertianity}, unpublished manuscript, (2004).
\bibitem[Jar3]{Patching}
  M. Jarden,
  \emph{Algebraic patching}, Springer, 2011.


\bibitem[Pr]{Prestel}
  A. Prestel,
  \emph{Lectures on formally real fields}, IMPA Publications: Lecture Notes \textbf{1093} (1984), Springer Verlag.
\bibitem[RZ]{RZ}
  L. Ribes and P. Zalesskii,
  \emph{Profinite groups}, Ergebnisse der Mathematik III \textbf{40}, 2nd edition, Springer, Berlin, 2010.
\end{thebibliography}
\end{document}